\def\Proj{\mathrm{Proj}}
\let\mathbb\mathbf
\def\bP{\mathbb{P}}
\def\cO{\mathcal{O}}
\def\c{\mathcal}
\def\wt{\widetilde}
\def\ol{\overline}
\def\L{\mathcal L}
\def\A{\mathcal{A}}
\def\P{\mathcal{P}}
\def\K{\mathcal K}
\def\cS{\mathcal{S}}
\def\X{\mathcal{X}}
\def\Hom{\mathrm{Hom}}
\def\Ext{\mathrm{Ext}}
\def\Cliff{\mathrm{Cliff}}
\newtheorem{thm}{Theorem}[section]
\newtheorem{prop}[thm]{Proposition}
\newtheorem{lemma}[thm]{Lemma}
\newtheorem{cor}[thm]{Corollary}
\newtheorem{definition}[thm]{Definition}
\theoremstyle{definition}
\newtheorem{example}[thm]{Example}
\newtheorem{rem}[thm]{Remark}
\newtheorem{parag}[thm]{} 
\theoremstyle{remark}
\newtheorem{notation}[thm]{Notation}
\def\O{\mathcal{O}}
\def\Z{\mathbf{Z}}
\def\Pic{\operatorname{Pic}}
\def\C{\mathbf{C}}
\newcommand{\Kprim}{\mathcal{K}^{\mathrm{prim}}}
\newcommand{\ie}{i.e.,\ } %\emph seulement en français
\newcommand{\restr}[2]{\left. #1 \right| _{#2}}
\newcommand{\trsp}[1]{\vphantom{#1}^{\mathsf T\!} #1}
\newcommand{\Q}{\mathbf{Q}}
\newcommand{\G}{\mathbf{G}}
\renewcommand\P{\mathbf{P}}
\let\bP\P
\newcommand{\bx}{\mathbf{x}}
\def\iota{i}
\newcommand{\sExt}{{\cal E}\kern -.1em xt} % sheaf Ext
\newcommand{\wps}{weighted projective space}
\title{Deformations and extensions of Gorenstein weighted projective spaces}
\author{Thomas Dedieu, Edoardo Sernesi}
\date{}
\begin{document}

\maketitle

\centerline{\it Dedicated to Ciro Ciliberto on the occasion of his 70th birthday}

\begin{abstract}
We study the existence of deformations of all $14$ Gorenstein weighted
projective spaces $\P$ of dimension $3$ by computing the number of times
their general anticanonical divisors are extendable. In favorable
cases (8 out of 14), we find that $\P$ deforms to a $3$-dimensional
extension  of a general non-primitively polarized $K3$ surface.
On our way we show that each such $\P$ in its anticanonical model
satisfies property $N_2$, i.e., its homogeneous ideal is generated by
quadrics and the first syzygies are generated by linear syzygies,
and we compute the deformation space of the
cone over $\P$. This gives as a byproduct the exact number of times $\P$ is
extendable.
\end{abstract}

\section{Introduction}

Some topics related to this paper have been discussed and  worked out with our friend and colleague Ciro Ciliberto. It is a great pleasure for us to dedicate this work to him.

It is well known that there are precisely 14 Gorenstein weighted
projective spaces of dimension 3 (see \cite{yP04}; we give the list in
Table~\ref{tab:Gwps}).
In this paper we introduce a method in the study of their
deformations,  consisting in studying simultaneously the deformations
and the extendability of their general anticanonical divisors. The
underlying philosophy  goes back to Pinkham \cite{hP74}, and then Wahl \cite{jW97} who showed   the
close connection between the existence of extensions of a projective
variety $X\subset \bP^r$ and the deformation theory of its affine cone
$CX\subset \mathbf{A}^{r+1}$. We discuss and recall this connection in
\S \ref{S:extendability}.  Wahl's interest was focused  on   canonical
curves, aiming  at a characterization of  those curves which are
hyperplane sections of a $K3$ surface (``$K3$ curves'') by means of
the behaviour of their gaussian map, thereafter called ``Wahl map''.
His program was carried out in \cite{ABS17} and applied in
\cite{CDS20} to the extendability of canonical curves, $K3$ surfaces
and Fano varieties.
The present work follows the same direction, as surface (resp.\ curve)
linear sections of Gorenstein weighted projective $3$-spaces in their
anticanonical embeddings are $K3$ surfaces with at worst canonical
singularities (resp.\ canonical curves).

Our starting point is the observation that  most
polarized general anticanonical divisors $(S,-\restr {K_\P} S)$ of the
\wps{s}  $\bP$
of Table~\ref{tab:Gwps} are non-primitively polarized (and singular).  This
suggests  considering a 1-parameter smoothing $(\cS,\L)$ of
$(S,-\restr {K_\P} S)$ and exploiting the fact that the extendability of
non-primitively polarized $K3$ surfaces is well understood, thanks to
work of Ciliberto--Lopez--Miranda \cite{CLM98},
Knutsen \cite{aK20},
and Ciliberto--Dedieu \cite{CDdouble, CDhigher}
(see \S \ref{S:nonprimitive} for details).
This plan works fine when the extendability
of $(S,-\restr {K_\P} S)$ coincides with that 
of the general non-primitively polarized $K3$ surface,
\ie the invariant
\[
  \alpha(S_t,L_t) = h^0(S_t,N_{S_t/\P^g}\otimes L_t^{-1})-g-1
\]
introduced in \ref{T:lvovski} takes the same value for all fibres of
$(\cS,\L)$.
What we get in this case is that $\P$ deforms to a threefold extension
of the general member of $(\cS,\L)$.
The final output (see Section~\ref{S:examples}) is an understanding of the deformation properties of
Cases $\#i$,
$i \in \{ 1, \dots, 7, 9\}$, in the notation of Tables~\ref{tab:Gwps}
and \ref{tab:alpha}.
A finer analysis is required for the other cases, which we don't try
to carry out here, although we make a couple of observations 
at the end of Section~\ref{S:examples}.

Our strategy involves various substantial technical verifications.
The main point is
controlling the deformation theory of the affine and projective cones
over possibly singular $K3$ surfaces. In the nonsingular case this is
a well known chapter of deformation theory, due to Schlessinger
\cite{mS73}: we extend it to the singular case in \S
\ref{S:extendability}. It is a non-trivial task to compute the
relevant deformation spaces in our examples,  and for this purpose we
took advantage of the computational power of Macaulay2 \cite{M2}.
Still this leaves some obstacles to the human user
(see the proof of Proposition~\ref{p:alpha-val} and the comments
thereafter), which we have
found are best coped with by considering more generally deformations
of cones over arithmetically Cohen--Macaulay surfaces or
arithmetically Gorenstein curves.

As a further reward of this computation, we obtain the exact number of
times each Gorenstein \wps{} of dimension $3$ is extendable
(Corollary~\ref{c:extend} and Table~\ref{tab:alpha}). We have not been
able however to identify the maximal extension in all cases.

Another condition we had to verify in order to apply
Wahl's criterion (Theorem~\ref{T:wahl}) is that the projective
schemes $X$ involved satisfy  condition $N_2$ (Definition~\ref{def:N2}),
so that ``each first order ribbon over $X$ is integrable to at most one
extension of $X$''.
We carry this out again with the computer and Macaulay2
(see Proposition~\ref{p:N2}), by explicitly computing the homogeneous
ideals of all Gorenstein weighted projective spaces in their
anticanonical embeddings, as well as their first syzygy modules.

Some of our end results about Gorenstein Fano threefolds in
Section~\ref{S:examples} can also be
obtained by direct calculations, using
computational tricks on weighted projective spaces similar to those
employed by Hacking in \cite[\S 11]{Hacking}, and showed to us by the
referee. We find it nice that the observations we made indirectly
using deformation theory may be confirmed by direct computations of a
different nature. 
Let us also mention the article \cite{Manetti} (which has been
continued in \cite{Hacking}),
in which degenerations
of the projective plane to various weighted projective planes are
exhibited: this is similar in spirit to what we do in
Section~\ref{S:examples}. 

The organization of the article is as follows.
In \S \ref{S:wps} we gather some elementary facts about weighted
projective geometry, and give the list of all $14$ Gorenstein weighted
projective spaces of dimension $3$.
In \S \ref{S:nonprimitive} we give a synthetic account of the
extension theory of non-primitive polarized $K3$ surfaces along the
lines of \cite{CDS20} and taking advantage of \cite{aK20}.
Section~\ref{S:extendability} is the technical heart of the paper and
is devoted to the deformation theory of cones and its application to
extensions.
This leads to our main technical result Theorem~\ref{T:smoothing} in
the following \S \ref{S:thearg}.
In Section~\ref{S:comput} we carry out the explicit computations
required for our application of Theorem~\ref{T:smoothing} to
Gorenstein \wps{s},
and in the final \S \ref{S:examples} we give the explicit output of
this application.

\medskip\noindent
\emph{Acknowledgements.}  ES thanks Alessio Corti and Massimiliano Mella for  enlightening
conversations.
ThD thanks Laurent Busé for having shown him the elimination
technique enabling the computation of the ideal of a weighted
projective space, and Enrico Fatighenti for introducing him to the
Macaulay2 package ``VersalDeformations''.
We thank the referee for useful comments and bibliographical
suggestions.

%%%%%%%%%%%%%%%%%%%%%%%%%%%%%%%%%%%%

\section{Gorenstein weighted projective spaces}
\label{S:wps}

We will consider some weighted projective spaces (wps for short) of
dimension 3. In this section we collect some preliminary
definitions and basic facts. The authoritative reference is
\cite{iD82}; we will also rely on \cite{BR86} and \cite{fletcher}.

\begin{parag}\label{p:wps}
Consider a weighted projective 3-space of the form
$\bP:=\bP(a_0,a_1,a_2,a_3)$, where the $a_i$'s are relatively prime 
positive integers.

It is not restrictive to   further assume, and we will do it,  that
any three of the $a_i$'s are relatively prime, in which case one says
that $\bP$ is \emph{well formed}. 
Let:
$$
m := \mathrm{lcm}(a_0,a_1,a_2,a_3), \quad  s:= a_0+a_1+a_2+a_3.
$$
The following holds:

\begin{itemize}
    \item[(1)] For all $d \in \Z$ the sheaf $\cO(d)$ is reflexive of
      rank 1, and it is invertible if and only if $d=km$  for some $k \in \Z$ \cite[\S 4]{BR86}.
    
    \item[(2)]  Pic$(\bP) = \Z \cdot[\cO(m)]$
      \cite[Thm. 7.1, p.~152]{BR86}.
    
    \item[(3)] $\bP$ is Cohen--Macaulay and its dualizing sheaf is
      $\omega_\bP =
      \cO(-s)$. Therefore $\bP$ is Gorenstein if and only if $m|s$
      \cite[Corollary 6B.10, p.~151]{BR86}. In this case $\bP$ has
      canonical singularities because it is a Gorenstein
      orbifold. This  follows for  example from
      \cite[Prop.~1.7]{mR80}. 
    
    \item[(4)] The intersection product in $\bP$ is determined by
      (see \cite[p.~240]{jK99}):
    $$ \cO(1)^3 = \frac{1}{a_0a_1a_2a_3}.
    $$
\end{itemize}
\end{parag}

\begin{lemma}\label{L:thomas}
Let $S\subset \bP(a_0,a_1,a_2,a_3)$ be a general hypersurface of
degree $d$, such that $\O(d)$ is locally free on $\P$.
For all $k \in \Z$,
the restriction to $S$ of $\cO(k)$ is locally free if and only if 
$$
\forall i\ne j:
\quad 
\mathrm{gcd}(a_i,a_j)|k.
$$
\end{lemma}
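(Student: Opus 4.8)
The plan is to reduce the statement to a pointwise computation at the points where $S$ meets the singular locus of $\P$, and to carry out that computation in a local cyclic quotient model. Local freeness is a local condition, and $\O(k)$, being reflexive of rank one, is already invertible on the smooth locus of $\P$; hence $\O(k)|_S$ is automatically invertible at every point of $S\setminus\Sing(\P)$, and only the points of $S\cap\Sing(\P)$ need to be examined. By well-formedness any three of the weights are coprime, so for each prime $\ell$ at most two weights are divisible by $\ell$; it follows that $\Sing(\P)$ is the union of the coordinate vertices $P_i$ (present when $a_i>1$, with isotropy group $\mu_{a_i}$) and the coordinate edges $E_{ij}=\{x_l=0 : l\neq i,j\}\cong\P(a_i,a_j)$, whose general point has isotropy group $\mu_{r}$ with $r:=\gcd(a_i,a_j)$, the two vertices of $E_{ij}$ being $P_i$ and $P_j$.

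Next I would pin down the position of a general $S$ relative to $\Sing(\P)$. Since $m\mid d$, the monomial $x_i^{d/a_i}$ has degree $d$, so a general defining form is nonzero at $P_i$ and $S$ avoids all the vertices. On the other hand $\O(d)=\O(m)^{\otimes d/m}$ is ample, so $S\cdot E_{ij}>0$ and $S$ meets every edge; for general $S$ the intersection $S\cap E_{ij}$ is a finite set of points contained in $E_{ij}\setminus\{P_i,P_j\}$, at each of which $S$ is transverse to $E_{ij}$. Thus the points of $S\cap\Sing(\P)$ are general points of the edges, each with isotropy exactly $\mu_{r}$, $r=\gcd(a_i,a_j)$.

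The heart of the matter is the local computation at such a point $p\in S\cap E_{ij}$. Analytically near $p$ one has $\P\cong\C\times(\C^2/\mu_r)$, where the $\C$-factor is the edge direction and $\mu_r$ acts on the transverse coordinates $(y,z)$ with weights $(a_{i'},a_{j'})\bmod r$, $\{i',j'\}=\{0,1,2,3\}\setminus\{i,j\}$ (both coprime to $r$ by well-formedness); in this model $\O(k)$ is the descent of the $\mu_r$-linearised trivial line bundle of weight $k$, which is invertible at $p$ iff $r\mid k$. Let $g$ be the local equation of $S$; it is $\mu_r$-invariant because $r\mid d$, so its differential at the fixed point has no $dy,dz$-component and transversality of $S$ to the edge means $\partial g/\partial x\neq 0$ there. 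Hence the lift $\widetilde S=\{g=0\}$ is smooth at the fixed point $\widetilde p$ and meets the fixed locus only there, $S$ acquires a cyclic quotient singularity $\tfrac1r(a_{i'},a_{j'})$ at $p$, and $\O(k)|_S$ is the descent of the weight-$k$ $\mu_r$-linearised line bundle on $\widetilde S$. By the standard dictionary between $\mu_r$-linearised line bundles and divisorial sheaves on a cyclic quotient singularity, this descent is invertible at $p$ exactly when $\mu_r$ acts trivially on the fibre at $\widetilde p$, i.e.\ when $r\mid k$.

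Putting the pieces together proves the equivalence. If $\gcd(a_i,a_j)\mid k$ for all $i\neq j$, then $r\mid k$ at every edge point, so $\O(k)$ is invertible at every point of $S\cap\Sing(\P)$ (the vertices being avoided), hence invertible along all of $S$ and $\O(k)|_S$ is locally free. Conversely, if $\O(k)|_S$ is locally free, then for each pair with $\gcd(a_i,a_j)>1$ the nonempty intersection $S\cap E_{ij}$ forces $r=\gcd(a_i,a_j)\mid k$ by the local computation; the condition is vacuous when $\gcd(a_i,a_j)=1$, whence $\O(k)|_S$ is locally free iff $\gcd(a_i,a_j)\mid k$ for all $i\neq j$. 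I expect the main obstacle to be the third step: guaranteeing through the transversality of a general $S$ that the transverse singularity at each point of $S\cap E_{ij}$ is exactly the cyclic quotient $\tfrac1r(a_{i'},a_{j'})$, and justifying the equivariant-descent criterion for the invertibility of $\O(k)|_S$ there.
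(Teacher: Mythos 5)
Your proof is correct, and it follows the paper's global strategy exactly — reduce to the points of $S\cap\Sing(\P)$, which for general $S$ are points of the coordinate edges $E_{ij}$ away from the vertices (the vertices being avoided because $m\mid d$), with isotropy $\mu_r$, $r=\gcd(a_i,a_j)$ — but the local computation at such a point is carried out differently. The paper argues directly in the local ring at $P\in E_{ij}$: since $x_i$ and $x_j$ are both units there, the monomials $x_i^{n_i}x_j^{n_j}$ give trivializations of $\O(n_ia_i+n_ja_j)$, and these degrees sweep out exactly $\gcd(a_i,a_j)\Z$; this makes the ``if'' direction completely transparent and elementary, and the ``only if'' direction is asserted to follow the same way. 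You instead pass to the cyclic quotient model $\C\times(\C^2/\mu_r)$, check via $\mu_r$-invariance of the defining equation and transversality of a general $S$ to the edge that $S$ has a $\tfrac1r(a_{i'},a_{j'})$ singularity at $p$, and invoke the descent criterion for linearized line bundles. Your route is heavier (it needs the transversality statement and the equivariant dictionary, which you rightly flag as the delicate points), but it buys a genuinely more airtight ``only if'': it shows that $\O(k)|_S$ itself, not merely $\O(k)$ on $\P$, fails to be invertible at $p$ when $r\nmid k$, a point the paper's one-line ``follows in the same way'' leaves implicit. Both proofs are valid; yours is a structural reformulation of the same monomial computation.
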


\begin{proof}
The local freeness needs only to be checked at the singular points.
As $S$ is general it may be singular only along the singular locus of
$\P$, hence only along the lines joining two coordinate points
$P_i= (0:\ldots:1:\ldots:0)$.
Moreover $S$ avoids all coordinate points themselves thanks to our assumption on the degree.

Let $P$ be a point on the line $P_iP_j$, off $P_i$ and $P_j$.
In the local ring
$\O_P$ we have the invertible monomials $x_i^{n_i}x_j^{n_j}$ whose
degrees sweep out 
\[
a_i\Z + a_j\Z = \gcd(a_i,a_j)\Z.
\]
This shows that if $\gcd(a_i,a_j)$ divides $k$ then $\O(k)$ is
invertible at all points of $P_iP_j$ but $P_i$ and $P_j$ themselves,
hence the 'if' part of the statement.
The 'only if' part follows in the same way.
\end{proof}

\begin{parag}\label{p:Gwps}
It follows from \ref{p:wps}  that there are exactly 14 distinct
$3$-dimensional weighted projective spaces which are Gorenstein,
see \cite{yP04}.
We list them in Table~\ref{tab:Gwps} below, together with the following information. 
For each $\P$ in the list, we denote by $S$ a general anticanonical
divisor:
it is a $K3$ surface with $ADE$ singularities
\cite{mR83}.
We also use the following notation:
\begin{compactitem}[--]
% \item $g$ is the genus of the Fano variety $\P$, as well as that of
% the polarized $K3$ surface $(S,- \restr {K_{\bP}} S)$, so that
% $-K_\P^3 = 2g-2$ and $h^0((S,- \restr {K_{\bP}} S)=g$;
\item $m$ is the lcm of the weights, so that $\O(m)$ generates
  $\Pic(\P)$; 
\item $s$ is the sum of the weights, so that
  $\omega_\P = \O(-s)$;
\item $\iota_S$ denotes the divisibility of $\restr {K_{\bP}} S$ in
  $\Pic(S)$, which is readily computed with Lemma~\ref{L:thomas}
  above;
\item $g_1$ is the genus of the primitively polarized $K3$ surface
  $(S,L_1)$, where $L_1 = - \tfrac 1 {\iota_S} \restr{K_\P} S$;
  we reserve the symbol $g$ to the common genus of 
 the Fano variety $\P$ and
the polarized $K3$ surface $(S,- \restr {K_{\bP}} S)$, \ie $2g-2=-K_\bP^3$.
\end{compactitem}
The rows are ordered according to $g_1$, then $\iota_S$ (decreasing),
then the weights.
We also indicate the singularities of $S$, which may be found
following \cite{fletcher}.

\begin{center}
\begin{tabular}{|l|l|l|l|l|l|l|l|}
  \hline
  \# & weights        & \raisebox{0mm}[3.5mm]{$-K_\P^3$} & $m$ & $s$ & $\iota_S$ & $g_1$ &
$\mathrm{Sings}(S)$
  \\
  \hline
 1 & (1, 1, 1, 3)   &   72 &      3 &  6 &     6 &   2 & smooth                 \\
 2 & (1, 1, 4, 6)   &   72 &     12 & 12 &     6 &   2 & $A_1$                  \\
 3 & (1, 2, 2, 5)   &   50 &     10 & 10 &     5 &   2 & $5A_1$                 \\
 4 & (1, 1, 1, 1)   &   64 &      1 &  4 &     4 &   3 & smooth                 \\
 5 & (1, 1, 2, 4)   &   64 &      4 &  8 &     4 &   3 & $2 A_1$                \\
 6 & (1, 3, 4, 4)   &   36 &     12 & 12 &     3 &   3 & $3A_3$                 \\
 7 & (1, 1, 2, 2)   &   54 &      2 &  6 &     3 &   4 & $3A_1$                 \\
 8 & (1, 2, 6, 9)   &   54 &     18 & 18 &     3 &   4 & $3A_1$, $A_2$          \\
 9 & (2, 3, 3, 4)   &   24 &     12 & 12 &     2 &   4 & $3A_1$, $4A_2$         \\
10 & (1, 4, 5, 10)  &   40 &     20 & 20 &     2 &   6 & $A_1$, $2A_4$          \\
11 & (1, 2, 3, 6)   &   48 &      6 & 12 &     2 &   7 & $2A_1$, $2A_2$         \\
12 & (1, 3, 8, 12)  &   48 &     24 & 24 &     2 &   7 & $2A_2$, $A_3$          \\
13 & (2, 3, 10, 15) &   30 &     30 & 30 &     1 &  16 & $3A_1$, $2A_2$, $A_4$  \\
14 & (1, 6, 14, 21) &   42 &     42 & 42 &     1 &  22 & $A_1$, $A_2$ , $A_6$ \ \\
\hline
\end{tabular}
\captionof{table}{Gorenstein $3$-dimensional weighted projective
  spaces}
\label{tab:Gwps}
\end{center}

\end{parag}

\section{Extendability  of  non-primitive polarized $K3$ surfaces}\label{S:nonprimitive}

Let us first recall the following.

\begin{definition}
A projective variety $X\subset \bP^r$ of dimension $d$ is
called \emph{$n$-extendable} for some $n \ge 1$ if there exists a
projective variety $\wt X\subset \bP^{r+n}$ of dimension $d+n$, not a
cone, such that  $X=\wt X \cap \bP^r$ for some linear embedding
$\bP^r\subset \bP^{r+n}$. The variety $\wt X$ is called an
\emph{$n$-extension} of $X$. If $n=1$ we call $X$ \emph{extendable}
and  $\wt X$  an \emph{extension} of $Y$.
\end{definition}

We refer to \cite{survey-angelo} for a beautiful tour on this subject.

 If $X$ is a Fano 3-fold of genus $g=-\frac{1}{2}K_X^3+1$, then a
general $S\in |-K_X|$ is a $K3$ surface naturally endowed with the ample
divisor $-\restr {K_{X}} S$ which makes $(S,-\restr {K_{X}} S)$ an
extendable polarized 
$K3$ surface of genus $g$. Suppose that $X$ has index $\iota_X > 1$,
\ie $-K_X=\iota_XH$ for an ample divisor $H$, indivisible in $\Pic(X)$.  Then
$(S,-\restr {K_{X}} S)$
is non-primitive because $-\restr {K_{X}} S=\iota_X \restr H S$ is
at least 
$\iota_X$-divisible.  Therefore by considering Fano 3-folds of index
$>1$ we naturally land in the world of extendable non-primitively
polarized $K3$ surfaces.

\begin{notation}
We denote by
$\K_g^k$ the moduli stack of polarized $K3$ surfaces of
  genus $g$ and index $k$, 
\ie pairs $(S,L)$ such that $S$ is a   $K3$ surface, possibly with ADE singularities,  and $L$ is
an ample and  globally generated
line bundle on $S$ with $L^ 2=2g-2$, such that $L=kL_1$ with $L_1$ a
primitive line bundle on $S$;
note that $(S,L_1)$ belongs to $\K_{g_1} ^1$, which we usually
denote by $\Kprim _{g_1}$, where
$2g_1-2 = L_1^2$ and $g=1+k^2(g_1-1)$.
\end{notation}

We have the following necessary condition for the extendability of a projective variety:

\begin{thm}[\cite{sL92}]\label{T:lvovski}%
Let $X\subset \P^ n$ be a smooth, projective, irreducible,
non-degenerate variety, not a quadric,
and write $L=\O_X(1)$. Set 
\[
\alpha(X,L)=h^ 0(N_{X/\P^n} \otimes L^{-1})-n-1.
\]
If $\alpha(X,L)<n$, 
then $X$ is at most $\alpha(X,L)$-extendable.
\end{thm}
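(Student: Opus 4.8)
The plan is to turn an $r$-extension of $X$ into $r$ sections of $N_{X/\P^n}\otimes L^{-1}$ that are linearly independent modulo a fixed $(n+1)$-dimensional ``trivial'' subspace, and then to read off $r\le\alpha(X,L)$ from the equality $h^0(N_{X/\P^n}\otimes L^{-1})=\alpha(X,L)+n+1$. I argue by contradiction: assuming $X$ is $(\alpha+1)$-extendable, with extension $\wt X\subset\P^{n+\alpha+1}$ not a cone, I aim to exhibit $n+\alpha+2$ linearly independent sections of $N_{X/\P^n}\otimes L^{-1}$, one more than $h^0$ allows.

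\emph{The trivial subspace.} Twisting the Euler sequence of $\P^n$ restricted to $X$ by $L^{-1}$ and using non-degeneracy (so that $H^0(\O_X(-1))=0$), I get an inclusion $\C^{n+1}\hookrightarrow H^0(T_{\P^n}|_X\otimes L^{-1})$. Composing with the normal bundle sequence
\[
0\to T_X\otimes L^{-1}\to T_{\P^n}|_X\otimes L^{-1}\to N_{X/\P^n}\otimes L^{-1}\to 0
\]
produces a distinguished subspace $W\subset H^0(N_{X/\P^n}\otimes L^{-1})$; these are the sections coming from the trivial (cone-type) extensions of $X$, and their count is $n+1$ up to the contribution of $H^0(T_X\otimes L^{-1})$, whose vanishing the smallness hypothesis $\alpha<n$ will be used to guarantee.

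\emph{Sections from the extension.} Choose complementary linear subspaces $\P^n$ and $\Lambda\cong\P^{\alpha}$ inside $\P^{n+\alpha+1}$, and let $\pi$ be the linear projection of $\P^{n+\alpha+1}$ onto $\P^n$ with center $\Lambda$. Moving the center $\Lambda$ among the complementary subspaces (equivalently, letting a suitable subgroup of $\mathrm{PGL}_{n+\alpha+2}$ act) deforms the image $\pi(\wt X)\subset\P^n$, while keeping $X=\wt X\cap\P^n$ as the special member. Differentiating this family at the base point, I obtain a linear map from the $(\alpha+1)$-dimensional space of ``extension directions'' to $H^0(N_{X/\P^n})$; because the parameters that move $\Lambda$ pair against the $\alpha+1$ extra linear coordinates of $\P^{n+\alpha+1}$, which cut out $X$ to first order inside $\wt X$, the image in fact lands in the subspace $H^0(N_{X/\P^n}\otimes L^{-1})$ (this is the degree $-1$ nature of a linear extension, matching $T^1_{CX}(-1)$ in the Pinkham--Wahl picture). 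This yields $\alpha+1$ further sections.

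\emph{Independence — the crux.} It remains to prove that the $\alpha+1$ sections produced from $\wt X$ are linearly independent and meet $W$ only in $0$, for then $h^0(N_{X/\P^n}\otimes L^{-1})\ge(n+1)+(\alpha+1)=\alpha+n+2$, contradicting $h^0=\alpha+n+1$. This is where the hypotheses genuinely enter: a linear dependence among these sections would force the projection of $\wt X$ from a sub-center to degenerate, i.e.\ would force $\wt X$ to be a cone, which is excluded; and the bound $\alpha<n$ is exactly what guarantees that the trivial part $W$ has the expected dimension $n+1$ and that the differential of the family of projections is injective, so that no further collapsing occurs. (The exclusion of quadrics is essential here: for a quadric $N_{X/\P^n}\otimes L^{-1}=\O_X(1)$ gives $\alpha=0$, yet quadrics are extendable to all orders, and indeed this independence step fails for them.) I expect this injectivity and non-degeneracy statement to be the main obstacle; the cleanest route is an induction on $r$, peeling off one projection at a time and checking at each stage that the intermediate extension stays irreducible, non-degenerate and not a cone, with $\alpha<n$ providing the numerical room for the induction to close.
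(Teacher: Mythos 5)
The paper offers no proof of this statement: it is quoted from Lvovski \cite{sL92}, so your attempt can only be compared with the proof in the literature. Your outline does reproduce the correct skeleton of that proof: the $(n+1)$-dimensional ``trivial'' subspace $W\subset H^0(N_{X/\P^n}\otimes L^{-1})$ cut out by the Euler sequence, a linear map from the space of directions of the center $\Lambda$ to the quotient $H^0(N_{X/\P^n}\otimes L^{-1})/W$ (the space playing the role of $T^1_{CX,-1}$ in Corollary~\ref{C:exseq-1}), and the numerical conclusion once that map is known to be injective. (A small inaccuracy on the way: $H^0(N_{X/\P^n}\otimes L^{-1})$ is not a subspace of $H^0(N_{X/\P^n})$; the correct statement is that the derivative of your family of projections defines a linear map $\C^{\alpha+1}\to H^0(N_{X/\P^n}\otimes L^{-1})$, which is the construction of \ref{p:sweepout} read backwards.)

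The genuine gap is the step you yourself call ``the crux'': it is the entire content of the theorem, and you only assert it. What is needed is Lvovski's key lemma: if the class in $H^0(N_{X/\P^n}\otimes L^{-1})/W$ attached to a point $p$ of the center vanishes, then $\wt X$ is a cone with vertex $p$. A linear dependence among your $\alpha+1$ classes modulo $W$ is exactly such a point $p$, so the theorem reduces to this lemma; but the lemma is not a formal consequence of ``the projection degenerates'' --- passing from the vanishing of a first-order datum to the global statement that $\wt X$ is a cone requires a genuine geometric input (in Lvovski's proof, Zak's theorem on tangencies), and this is where the hypotheses $\alpha(X,L)<n$ and ``$X$ not a quadric'' actually do their work. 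They do not, as you suggest, serve to guarantee $\dim W=n+1$: that requires $H^0(T_X\otimes L^{-1})=0$, a separate verification. Finally, the induction you sketch, ``peeling off one projection at a time'', does not close as stated, because an intermediate linear section of $\wt X$ containing $X$ may well be a cone even when $\wt X$ is not --- ruling this out at each stage is again the key lemma. As it stands, the argument is a correct reformulation of the theorem in terms of normal bundle sections rather than a proof of it.
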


When the polarization of $X$ is clear from the context, we write
$\alpha(X)$ instead of $\alpha(X,L)$.
Note that if $X$ is a smooth $K3$ surface or Fano variety (resp.\ a
canonical curve, hence $L=K_X$) then  
$$
\alpha(X)=  H^1(X,T_X\otimes L^{-1})
\quad
(\mathrm{resp.}\ \mathrm{cork}(\Phi_{\omega_X}),
$$ 
with $\Phi_{\omega_X}$ the Gauss--Wahl map of $X$,
see for instance \cite[\S 3]{CDS20}.

For $K3$ surfaces and canonical curves, the converse to
Theorem~\ref{T:lvovski} also 
holds, under some conditions. Precisely we have:

\begin{thm}[\cite{CDS20}, Thm.~2.1 and Thm.~2.17]\label{T:CDS}
Let $(X,L)$ be a smooth polarized $K3$ surface
(resp.\ $(X,L)=(C,K_C)$ a canonical curve) of genus $g$.
Assume that
$g \ge 11$ and $\mathrm{Cliff}(X,L) \ge 3$.
% \begin{itemize}
% \item[(a)]
% \end{itemize}
Then $(X,L)$ is $\alpha(X,L)$-extendable.

More precisely, every non-zero
$e\in H^1(X,T_X\otimes L^{-1})$
(resp.\ $e \in \ker (\trsp {\Phi_{\omega_C}} )$)
defines an  extension $X_e$ of $X$
which is unique up to projective automorphisms of $\bP^{g+1}$
(resp.\ $\bP^g$) fixing every point of $\bP^g$ (resp.\ $\bP^{g-1}$),
and there exists a \emph{universal extension}
$\tilde X \subset \P^{g+\alpha(X,L)}$ (resp.\ $\P^{g-1+\alpha(X,L)}$)
of $X$ having each $X_e$ as a linear section containing $X$.
\end{thm}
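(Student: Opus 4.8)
The plan is to pin the extendability of $(X,L)$ between two bounds that both equal $\alpha(X,L)$. The upper bound is already furnished by Theorem~\ref{T:lvovski}, so the real work is to construct extensions realizing this bound and to organize them into one universal object. Throughout I would pass to the affine cone $CX \subset \mathbf{A}^{g+1}$ (resp.\ $\mathbf{A}^{g}$ in the curve case) and use the Pinkham correspondence recalled in the introduction, which turns $n$-extensions of $X \subset \mathbf{P}^r$ into $\mathbf{G}_m$-equivariant deformations of $CX$ carrying negative weight on the base. At first order this identifies the ribbon extensions of $X$ with the degree $-1$ graded piece $T^1_{CX}(-1)$, which for a smooth $K3$ surface one computes to be $H^1(X, T_X \otimes L^{-1})$ and for a canonical curve to be $\ker(\trsp{\Phi_{\omega_C}})$; by the discussion following Theorem~\ref{T:lvovski} both have dimension $\alpha(X,L)$, so the bound is saturated infinitesimally.

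The crux is then integrability: I must lift each nonzero first-order class $e$ to an honest one-parameter family, and show the resulting extension $X_e$ is essentially unique. Uniqueness I would extract from property $N_2$: by Green--Lazarsfeld the hypothesis $\mathrm{Cliff}(X,L) \ge 3$ forces $N_2$, and Wahl's criterion (Theorem~\ref{T:wahl}) then says each first-order ribbon integrates to at most one extension, which is exactly the asserted uniqueness up to projective automorphisms fixing $\mathbf{P}^{g}$ (resp.\ $\mathbf{P}^{g-1}$). For existence I would show that the obstructions to lifting, which live in the negative-degree part of $T^2_{CX}$, vanish on the classes at hand; this is where I expect to use $\mathrm{Cliff}(X,L) \ge 3$ together with $g \ge 11$ in full, the genus bound being needed to exclude the sporadic low-Clifford geometries (trigonal curves, plane quintics, and their companions) on which the Wahl machinery degenerates.

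To build the universal extension I would integrate the whole $\alpha$-dimensional space $T^1_{CX}(-1)$ simultaneously rather than one line at a time. Granting the unobstructedness established above, there is a flat $\mathbf{G}_m$-equivariant deformation of $CX$ over $\mathbf{A}^{\alpha}$ that realizes all of $T^1_{CX}(-1)$; projectivizing its total space produces $\tilde X \subset \mathbf{P}^{g+\alpha}$ (resp.\ $\mathbf{P}^{g-1+\alpha}$). Slicing $\tilde X$ by the hyperplanes dual to a line $\C \cdot e$ returns $X_e$ as a linear section through $X$, which simultaneously certifies that $X$ is $\alpha$-extendable and, since the family is non-trivial in each direction, that $\tilde X$ is not a cone.

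I expect integrability to be the genuine obstacle; once the cone dictionary is in place the rest is bookkeeping. Controlling $T^2_{CX}$ in negative degrees is not formal, and rather than attack it by brute cohomology I would route it, in the canonical-curve case, through the theorem of \cite{ABS17} that a curve of genus $g \ge 11$ and Clifford index $\ge 3$ carrying a nontrivial ribbon is a hyperplane section of a $K3$ surface; this packages exactly the integrability and uniqueness demanded here. For a $K3$ surface $(X,L)$ I would reduce to this curve statement by restricting a ribbon over $X$ to a ribbon over a general hyperplane section $C$ --- which inherits $\mathrm{Cliff}(C) = \mathrm{Cliff}(X,L) \ge 3$ and genus $g \ge 11$ --- and then propagating its integration back up to $X$ through the graded cone deformation theory.
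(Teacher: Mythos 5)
This theorem is not proved in the paper --- it is imported verbatim from \cite{CDS20} (Thm.~2.1 and 2.17) --- but your sketch reconstructs exactly the strategy of that reference and of \cite{ABS17}: Pinkham's cone dictionary identifying extensions with degree $-1$ graded deformations of $CX$, Wahl's criterion plus property $N_2$ for uniqueness, the vanishing of $T^2$ in degrees $\le -2$ from \cite{ABS17} (propagated from the curve section up to the $K3$) for integrability, and simultaneous integration of all of $T^1_{CX,-1}$ to produce the universal extension. The only blemish is attributional: the implication $\mathrm{Cliff}\ge 3 \Rightarrow N_2$ is the $p=2$ case of Green's conjecture (Voisin, Schreyer), whereas Green--Lazarsfeld proved the converse nonvanishing direction.
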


When $(X,L)$ is a smooth polarized $K3$ surface, by definition,
$\Cliff (X,L)$ is the \emph{Clifford index} of any   nonsingular curve  $C\in |L|$; by \cite{mR76b,DM89,GL87},   this does not depend on the choice of $C$. 
Note that in case $(X,L)$ is a $K3$ surface the extension $X_e$ in the theorem is an arithmetically
Gorenstein Fano variety of dimension three  with canonical
singularities.

 Unfortunately $H^1(X,T_X\otimes L^{-1})$ is not easy to compute in general, but in the non-primitive setting we can reduce to a more amenable case.

\begin{lemma}\label{L:CLM98}
Let $S\subset \bP^g$ be a smooth $K3$ surface,
and $L = \restr {\O_{\P^g}(1)} S$. Then:
$$
H^1(S,T_S\otimes L^{-j})=
\begin{cases}
  \mathrm{coker}
  \bigl[
  H^0(S,L)^\vee \to   H^0(S,N_{S/\bP^g}(-1))
  \bigr],
  &\text{if $j=1$} \\
  H^0(S,N_{S/\bP^g}\otimes L^{-j}),
  & \text{if $j\ge 2$}.
\end{cases}
$$
\end{lemma}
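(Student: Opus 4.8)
The plan is to combine the two standard exact sequences attached to the embedding $S\subset\bP^g$: the restriction to $S$ of the Euler sequence,
\[
0 \to \cO_S \to H^0(S,L)^\vee \otimes L \to \restr{T_{\bP^g}}{S} \to 0,
\]
and the normal bundle sequence
\[
0 \to T_S \to \restr{T_{\bP^g}}{S} \to N_{S/\bP^g} \to 0 .
\]
In the first sequence I use that the embedding is linearly normal, so that $H^0(\bP^g,\cO(1))\to H^0(S,L)$ is an isomorphism and the middle term is genuinely $H^0(S,L)^\vee\otimes L$. I would twist both sequences by $L^{-j}$ and chase the resulting long exact cohomology sequences, the whole computation being fed by vanishing statements for powers of $L$ on the $K3$ surface $S$.

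First I would record these vanishings. Since $\omega_S\cong\cO_S$, Serre duality gives $H^i(S,L^{-j})\cong H^{2-i}(S,L^j)^\vee$; as $L$ is ample, Kodaira vanishing yields $H^1(S,L^n)=0$ for every $n\neq 0$ and $H^0(S,L^{-n})=0$ for $n>0$, while $H^1(S,\cO_S)=0$ because $S$ is a $K3$. Feeding these into the Euler sequence twisted by $L^{-j}$, I obtain for $j=1$ that $H^0(\restr{T_{\bP^g}}{S}\otimes L^{-1})\cong H^0(S,L)^\vee$, and for $j\ge 2$ that $H^0(\restr{T_{\bP^g}}{S}\otimes L^{-j})=0$. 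The delicate point is the vanishing of $H^1(\restr{T_{\bP^g}}{S}\otimes L^{-j})$: since $H^1(S,L^{1-j})=0$ for all $j\ge 1$, the twisted Euler sequence identifies this group with the kernel of the map
\[
H^2(S,L^{-j}) \to H^0(S,L)^\vee\otimes H^2(S,L^{1-j})
\]
induced on $H^2$, whose transpose under Serre duality is exactly the multiplication map $H^0(S,L)\otimes H^0(S,L^{j-1})\to H^0(S,L^j)$. Thus $H^1(\restr{T_{\bP^g}}{S}\otimes L^{-j})=0$ is equivalent to the surjectivity of this multiplication map, and this is the one genuinely non-formal input: for $j=1$ the map is $H^0(S,L)\otimes H^0(S,\cO_S)\to H^0(S,L)$, an isomorphism, so the vanishing is free, but for $j\ge 2$ the surjectivity is precisely projective normality of the embedded $K3$ surface, i.e.\ the classical theorem of Saint-Donat and Mayer. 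I expect this step to be the main obstacle and the one deserving the most care (together with keeping track of the canonical identifications so that the final cokernel is the stated one).

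Granting the multiplication surjectivity, I would conclude by substituting the computed groups into the normal bundle sequence twisted by $L^{-j}$. For $j=1$ the tail
\[
H^0(S,L)^\vee \to H^0(S,N_{S/\bP^g}(-1)) \to H^1(S,T_S\otimes L^{-1}) \to H^1(\restr{T_{\bP^g}}{S}\otimes L^{-1})=0
\]
exhibits $H^1(S,T_S\otimes L^{-1})$ as the cokernel of the displayed map, which is the first case of the statement. For $j\ge 2$ the two flanking groups $H^0(\restr{T_{\bP^g}}{S}\otimes L^{-j})$ and $H^1(\restr{T_{\bP^g}}{S}\otimes L^{-j})$ both vanish, so the connecting map induces an isomorphism $H^0(S,N_{S/\bP^g}\otimes L^{-j})\cong H^1(S,T_S\otimes L^{-j})$, which is the second case. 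The only bookkeeping to watch is that $N_{S/\bP^g}(-1)=N_{S/\bP^g}\otimes L^{-1}$ and that the map out of $H^0(S,L)^\vee$ is the one induced by the Euler identification, so that the $j=1$ cokernel matches the statement verbatim.
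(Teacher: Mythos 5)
Your proof is correct: the twisted Euler and normal-bundle sequences, combined with the vanishings on a $K3$ and with projective normality of the embedding (Mayer--Saint-Donat) to kill $H^1(\restr{T_{\bP^g}}{S}\otimes L^{-j})$ for $j\ge 2$, give exactly the stated identifications. The paper itself offers no argument here --- it simply cites \cite{CLM98}, (2.8) --- and what you have written is precisely the standard cohomology chase underlying that reference, with the one genuinely non-formal input correctly isolated.
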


\begin{proof}
See  \cite{CLM98} (2.8).
\end{proof}

This lemma, applied to a smooth $(S,L_1)\in \K_{g_1}$ with $L_1$ very ample,
tells us that $(S,L)=(S,jL_1)$ with $j\ge 2$
% satisfies condition (b) of Theorem \ref{T:CDS}
is extendable
if and only if $H^0(S,N_{S/\bP^{g_1}}\otimes L_1^{-j})\ne 0$.  The
possibilities for the pair $(S,L_1)$ and $j$ are then very limited.
In particular, $H^0(S,N_{S/\bP^{g_1}}\otimes L_1^{-j})= 0$ for all $j \geq 2$
as soon as $S$ satisfies the property $N_2$ (see
Definition~\ref{def:N2}), see \cite[Lem.~1.1]{aK20} and the references therein.
In fact the possibilities have been completely classified
by A.~L.~Knutsen \cite{aK20},
see also \cite{CLM98} and \cite{CDhigher}.
The result is the
following:

\begin{thm}[\cite{aK20}]
\label{T:extveryample1}
Let $(S,L_1)\in \Kprim_{g_1}$ with $S$ smooth and $L_1$ very ample.
Then
% $H^1(S,\Omega^1_S\otimes L_1^{\otimes j})=0$
$H^1(S,T_S\otimes L_1^{-j})=0$
for all $j\ge 2$ except in the following cases:

\begin{center}
\normalfont
\begin{tabular}{|c|c|c|c|l|}
\hline 
$g_1$&$j$&$g(L_1^{j})$&$h^1(S,T_S\otimes L_1^{-j})$& Notes \\
\hline 
   3&2&9&10& any $(S,L_1)$ \\
   3&3&19&4& any $(S,L_1)$ \\
   3&4&33&1& any $(S,L_1)$ \\
    4&2&13&5& any $(S,L_1)$ \\
    4&3&28&1& any $(S,L_1)$ \\
    5&2&17&3& any $(S,L_1)$ \\
    6&2&21&1& any $(S,L_1)$ \\
    7&2&25&1&(1) \\
    8&2&29&1&(2)\\
    9&2&33&1&(3)\\
    10&2&37&1&(4)\\
    \hline
\end{tabular}
\end{center}
where
\begin{compactenum}[\normalfont (1)]

    \item  $S$ is one of the following:

      \begin{compactenum}[\normalfont (I)]

      \item     a divisor in the linear system $|3H-3F|$ on the
        quintic  rational normal scroll $T\subset \bP^7$ of type
        $(3,1,1)$, with $H$ a hyperplane section and $F$  a fibre of
        the scroll.

      \item   a quadratic section of the sextic Del Pezzo threefold $\bP^1\times\bP^1\times\bP^1\subset \bP^7$ embedded by Segre.
    
      \item   the section of $\bP^2\times \bP^2\subset \bP^8$, embedded by Segre, with a hyperplane and a quadric.
    
  \end{compactenum}
  
\item   $S$ is an anticanonical divisor in a septic Del Pezzo 3-fold (the blow-up of $\bP^3$ at a point). 

\item  $S$ is    one of the following:

      \begin{compactenum}[\normalfont (I)]

      \item  the 2-Veronese embedding of a quartic of $\bP^3$; equivalently a quadratic section of the Veronese variety $v_2(\bP^3)\subset\bP^9$.

      \item  a quadratic section of the cone over the anticanonical embedding of the Hirzebruch surface $\mathbb{F}_1\subset\bP^8$.
   
      \end{compactenum}
      
\item   $S$ is a quadratic section of the cone over the Veronese
  surface $v_3(\bP^2)\subset \bP^9$.   
\end{compactenum}
In all cases, except $g_1=3$ and $j=2$, we have $\mathrm{Cliff}(S,L^j)\ge 3$.
\end{thm}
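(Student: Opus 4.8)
The plan is to use Lemma~\ref{L:CLM98} to turn the statement into a question about the normal bundle, and then to organise the whole classification around the Clifford index. By that lemma, for every $j \ge 2$ one has an isomorphism $H^1(S, T_S \otimes L_1^{-j}) \cong H^0(S, N_{S/\bP^{g_1}} \otimes L_1^{-j})$, where $\bP^{g_1}$ is the embedding by the primitive very ample $L_1$. So the entire theorem reduces to deciding when this space of normal sections is nonzero and, when it is, computing its dimension.

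First I would isolate the basic vanishing. As recalled before the statement, if $(S,L_1)$ satisfies property $N_2$ then $H^0(S, N_{S/\bP^{g_1}} \otimes L_1^{-j}) = 0$ for all $j \ge 2$, by \cite[Lem.~1.1]{aK20}. Combined with the fact that $\Cliff(S,L_1) \ge 3$ forces property $N_2$ for a very ample $L_1$ (the $K3$ analogue of the Green--Lazarsfeld results relating syzygies to the Clifford index), this shows that nonvanishing of $H^1$ forces $\Cliff(S,L_1) \le 2$. For small genus this is automatic, since a general curve of genus $g_1 \le 6$ has Clifford index at most $2$; this accounts for the \emph{any $(S,L_1)$} rows. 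For $g_1 \ge 7$ the condition $\Cliff(S,L_1) \le 2$ is very restrictive and isolates the special surfaces listed.

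Next I would carry out the classification together with the dimension count, case by case. In the low-genus rows $S$ is explicitly a (weighted) complete intersection and $N_{S/\bP^{g_1}}$ splits as a direct sum of line bundles, so the computation is immediate. For instance, for $g_1 = 3$ the surface $S$ is a quartic in $\bP^3$, $N_{S/\bP^3} = \cO_S(4)$, and $h^0(S, N_{S/\bP^3} \otimes L_1^{-j}) = h^0(S,\cO_S(4-j))$ equals $10, 4, 1, 0$ for $j = 2,3,4,\ge 5$, matching the table. In the higher-genus rows the hypothesis $\Cliff(S,L_1) = 2$ forces $S$ to sit as a hyperplane or quadratic section of a threefold $Y$ of small degree --- a rational normal scroll, a Veronese or Segre variety, a cone over one of these, or a Del Pezzo threefold --- and I would read off the normal section from the geometry of $Y$: the threefold $Y$ itself (or, in the degenerate rows, the cone over $S$) realises the extension and produces a single distinguished nonzero element of $H^0(S, N_{S/\bP^{g_1}} \otimes L_1^{-j})$, whence $h^1 = 1$. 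The concluding statement $\Cliff(S,L^j) \ge 3$ for $(g_1,j) \ne (3,2)$ is then a direct computation on the pluri-polarised surface, and is exactly what is needed so that Theorem~\ref{T:CDS} applies to $(S,L^j)$.

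The main obstacle is twofold and lies entirely in the last paragraph. First, proving that the classification of very ample $K3$ surfaces with $\Cliff(S,L_1) \le 2$ is complete --- that the listed scroll, Veronese, Segre, Del Pezzo and cone models exhaust all possibilities, and in particular that none survive for $g_1 \ge 11$ --- requires the lattice-theoretic and Brill--Noether analysis of such polarised surfaces rather than any routine calculation. Second, the explicit evaluation of $h^0(S, N_{S/\bP^{g_1}} \otimes L_1^{-j})$ in the exceptional cases, where $S$ appears as a section of a cone or of a minimal-degree threefold, demands a careful reading of the ambient geometry in order to be certain the count is exactly $1$ and not larger. This is where I expect the real work to be.
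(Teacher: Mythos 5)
Your opening reduction is exactly the paper's: Lemma~\ref{L:CLM98} converts $H^1(S,T_S\otimes L_1^{-j})$ into $H^0(S,N_{S/\bP^{g_1}}\otimes L_1^{-j})$ for $j\ge 2$. The difference lies in what happens next. The paper's entire proof is the statement that the table is obtained by listing ``all possible cases described by Proposition 1.4 of \cite{aK20}'' --- that is, the classification and the dimension counts are imported wholesale from Knutsen, which is why the theorem is attributed to him; only the final assertion on $\Cliff(S,L_1^j)$ is checked directly, as in your last sentence. You instead sketch a proof of the cited proposition itself: nonvanishing forces failure of $N_2$, hence $\Cliff(S,L_1)\le 2$ (note this step uses Green's conjecture for curves on $K3$ surfaces, a deep input); this is automatic for $g_1\le 6$ and very restrictive for $g_1\ge 7$; then one computes dimensions case by case. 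This is indeed the architecture of Knutsen's argument, and your explicit check for $g_1=3$ is correct. Be aware, though, that $\Cliff(S,L_1)\le 2$ is only necessary, never sufficient: e.g.\ for $g_1=5$, $j=3$ one has $N_{S/\bP^5}\otimes L_1^{-3}=\cO_S(-1)^{\oplus 3}$ with no sections, so that case rightly does not appear in the table even though the Clifford index is small.

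The two points you yourself flag as ``the real work'' --- completeness of the list of very ample $(S,L_1)$ with $\Cliff(S,L_1)\le 2$ for $g_1\ge 7$ (in particular that nothing survives for $g_1\ge 11$), and the proof that $h^0(N_{S/\bP^{g_1}}\otimes L_1^{-2})$ equals $1$ exactly in the exceptional cases (exhibiting the ambient threefold only gives the lower bound $\ge 1$) --- are precisely the content of \cite[Prop.~1.4]{aK20}. So as a standalone argument your text has a genuine gap there, even though the paper does not fill that gap either but delegates it to the reference. One small dividend of your direct method: for $g_1=4$, $j=2$ the general $(S,L_1)$ is a $(2,3)$ complete intersection in $\bP^4$, so $N_{S/\bP^4}\otimes L_1^{-2}=\cO_S\oplus\cO_S(1)$ and $h^0=1+5=6$, whereas the table prints $5$; the value $6$ is the one consistent with the paper's own Table~\ref{tab:alpha} (case \#9, $\alpha(S')=6$) and with the $6$-step universal extension described in Example~\ref{ex6}, so the table entry appears to be a misprint that your computation corrects.
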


\begin{proof}
Using the identification given by Lemma \ref{L:CLM98}, the proof reduces to list all possible cases described by Proposition 1.4 of \cite{aK20}.  The final statement is an easy calculation.
\end{proof}

The case $g_1=3$ and $j=2$ is not liable to Theorem~\ref{T:CDS} as the
Clifford index in this case is too small, but it
has been studied by hand in \cite{CDdouble,CDhigher}.

Theorem \ref{T:extveryample1} does not cover the cases of $(S,L_1)$
hyperelliptic.  We shall only consider the case $g(L_1)=2$, which will be
sufficient for our purposes.
The following result has been obtained in \cite{CDdouble} by geometric
means; we give here a cohomological proof.

\begin{lemma}\label{L:genus2}
Let $(S,L_1)\in \Kprim_2$. Then the dimension of
$H^1(S,T_S\otimes L_1^{-j})$ takes the values given by the following table:
\begin{center}
\normalfont
\begin{tabular}{|c|c|c|c|}
\hline
  $j$& \raisebox{0mm}[4mm]{$g(L_1^j)$} & $\Cliff(L_1^j)$ &
$h^1(S,T_S\otimes L_1^{- j})$ \\
  \hline
1&2&0&18 \\
2&5&0&15 \\
3&10&2&10 \\
4&17&$>2$&6\\
5&26&$>2$&3\\
6&37&$>2$&1\\
$\ge 7$ &&  $>2$ &0\\
\hline
 \end{tabular}
\end{center}
\end{lemma}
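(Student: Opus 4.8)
The plan is to compute $h^1(S, T_S \otimes L_1^{-j})$ for a genus-$2$ primitively polarized $K3$ surface by a direct cohomological bookkeeping, exploiting the fact that a genus-$2$ $K3$ surface is a double cover of $\mathbf P^2$. For $(S,L_1) \in \Kprim_2$ the linear system $|L_1|$ defines a degree-$2$ morphism $\pi \colon S \to \mathbf P^2$ branched over a sextic curve, with $L_1 = \pi^* \O_{\mathbf P^2}(1)$. The idea is that all the cohomology groups $H^1(S, T_S \otimes L_1^{-j}) = H^1(S, T_S(-j))$ can be accessed through $\pi$, because $\pi_* \O_S = \O_{\mathbf P^2} \oplus \O_{\mathbf P^2}(-3)$ and, more generally, tensoring with powers of $L_1$ turns the computation downstairs on $\mathbf P^2$ into a finite sum of cohomology of explicit line bundles. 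The main technical input is a usable expression for $\pi_* T_S$, which I would extract from the tangent sequence of the double cover together with the ramification data of the sextic branch divisor $B$.

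First I would set up the double-cover geometry precisely: write $L = L_1^j$, record that $L^2 = 2g-2$ with $g = g(L_1^j) = 1 + j^2$, giving the genus column of the table ($j=1,\dots,6$ yields $g = 2,5,10,17,26,37$, matching the entries). Next I would establish the Clifford index entries, which for a general such surface are governed by the gonality of curves in $|L_1^j|$ and the hyperelliptic pencil: the $j=1$ and $j=2$ cases inherit $\Cliff = 0$ from the genus-$2$ pencil pulled back, $j=3$ gives $\Cliff = 2$, and $j \ge 4$ gives $\Cliff > 2$; these follow from the standard description of Clifford indices of curves on $K3$ surfaces in terms of the lattice, which I would cite. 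Then the core step: compute $h^1(S, T_S(-j))$ by pushing forward to $\mathbf P^2$. Using the projection formula and the splitting $\pi_* \O_S = \O \oplus \O(-3)$, each $H^1(S, T_S(-j))$ becomes a sum of $H^1$ of line bundles on $\mathbf P^2$ after resolving $\pi_* T_S$; by Serre duality and the tangent sequence for the double cover one reduces to the Euler characteristics and the vanishing of the extreme cohomology, which on $\mathbf P^2$ are completely explicit.

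The step I expect to be the main obstacle is controlling $\pi_* T_S$ cleanly. The tangent sheaf of $S$ does not split as conveniently as the structure sheaf, so I would instead work with the cotangent side via Serre duality: since $\omega_S = \O_S$, we have $H^1(S, T_S(-j)) \cong H^1(S, \Omega^1_S(j))^\vee$, and $\Omega^1_S$ sits in the conormal/ramification sequence for the double cover. Computing $h^1(S, \Omega^1_S(j))$ then amounts to understanding the pullback $\pi^* \Omega^1_{\mathbf P^2}$ twisted by the half of the branch divisor, for which the residue/ramification sequence $0 \to \pi^*\Omega^1_{\mathbf P^2} \to \Omega^1_S \to \O_R(-R) \to 0$ (with $R$ the ramification curve) is the key tool. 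The delicate bookkeeping is the contribution of the $\O(-3)$ summand and the ramification term, and I would verify the final column by checking the Euler characteristic against the known value $\chi(T_S(-j)) = h^0 - h^1 + h^2$ together with the vanishing $h^0(S,T_S(-j)) = 0$ (for $j \ge 1$, as $T_S$ has no sections on a $K3$ and the twist only decreases them) and the identification of $h^2(S,T_S(-j)) = h^0(S, \Omega^1_S(j))$ via duality, the latter being computable from the push-forward and generically zero for large $j$, which forces $h^1$ to equal $-\chi(T_S(-j))$ in the high range and produces the tail $0$ for $j \ge 7$.
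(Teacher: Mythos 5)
Your setup and your key tool are exactly those of the paper's proof: the double cover $\pi\colon S\to\P^2$ branched over a sextic with $L_1=\pi^*\O_{\P^2}(1)$, the passage to the cotangent side (on a $K3$, $H^1(S,T_S\otimes L_1^{-j})$ and $H^1(S,\Omega^1_S\otimes L_1^{j})$ have the same dimension), and the ramification sequence $0\to\pi^*\Omega^1_{\P^2}\to\Omega^1_S\to\O_R(-R)\to0$ combined with the pulled-back, twisted Euler sequence to control the term $\pi^*\Omega^1_{\P^2}\otimes L_1^{j}$. The genus and Clifford columns are handled correctly.

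The gap is in the final accounting. You propose to read off the last column from $\chi(T_S\otimes L_1^{-j})$ together with the vanishings $h^0(T_S\otimes L_1^{-j})=0$ and $h^2(T_S\otimes L_1^{-j})=h^0(S,\Omega^1_S\otimes L_1^{j})=0$ ``for large $j$''. That last vanishing is false, and in the wrong direction: $H^0(S,\Omega^1_S\otimes L_1^{j})$ contains $H^0(S,\pi^*\Omega^1_{\P^2}\otimes L_1^{j})$, whose pushforward is $\Omega^1_{\P^2}(j)\oplus\Omega^1_{\P^2}(j-3)$, so it grows like $j^2$. Correspondingly Riemann--Roch gives $\chi(T_S\otimes L_1^{-j})=2j^2-20$, so $-\chi=20-2j^2$ is negative for $j\ge4$ and cannot equal $h^1$; already for $j=2$ it gives $12$ instead of $15$. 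The tail $h^1=0$ for $j\ge7$ does not follow from $\chi$ either. What actually produces the table is the cohomology of line bundles on the ramification curve $R$, a genus-$10$ curve isomorphic to the branch sextic with $\O_R(R)=L_1^{3}\otimes\O_R=\omega_R$: for $3\le j\le6$ one gets $h^1(S,\Omega^1_S\otimes L_1^{j})=h^1(R,L_1^{j-3}\otimes\O_R)=h^0(\O_R(6-j))=10,6,3,1$ after showing that the multiplication maps $H^0(L_1^{j-1})\otimes H^0(L_1)\to H^0(L_1^{j})$ dispose of $H^1(S,\pi^*\Omega^1_{\P^2}\otimes L_1^{j})$; the cases $j=1,2$ require the separate corrections $h^1(R,L_1^{-2}\otimes\O_R)-3=18$ and $h^1(R,L_1^{-1}\otimes\O_R)=15$; and $j\ge7$ follows because then $\deg(L_1^{j-3}\otimes\O_R)>2g(R)-2$. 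You have the right exact sequences in hand, but you must run them to the end on the curve $R$ rather than fall back on an Euler-characteristic count on $S$.
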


   Recalling Theorem \ref{T:CDS} this table implies that $(S,L^j_1)$ is extendable for $4\le j \le 6$, since  $\Cliff (S,L^j_1)\ge 3$. In particular  $(S,L^6_1)\in \K_{37}$ and  is precisely 1-extendable; in fact it is hyperplane section of $\bP(1,1,1,3)$.
Lemma \ref{L:genus2} also tells us that the surfaces $(S,L^4_1)$ and
$(S,L^5_1)$ are  6-extendable and 3-extendable respectively, in
agreement with the results in  \cite{CDdouble}, (4.8). There, also the
situation in case $j=3$ (to which Theorem~\ref{T:CDS} does not apply)
is completely described.

\begin{proof}
The elementary computation of $\Cliff(L_1^{j})$ is left to the
reader. The surface $S$ is a double plane $\pi:S\longrightarrow \bP^2$
branched along a sextic $\Gamma$ and $L_1=\pi^*\cO_{\bP^2}(1)$.
Denote by $R$ the ramification curve of $\pi$. We have
$\cO_S(R)=L_1^{3}$. The cotangent sequence of $\pi$ is
$$
0 \to \pi^*\Omega^1_{\bP^2}\longrightarrow \Omega^1_S \longrightarrow \Omega^1_{S/\bP^2}\to 0
$$
where $\Omega^1_{S/\bP^2}=\cO_R(-R)
=L_1^{-3}\otimes\cO_R=\omega_R^{-1}$.
Therefore for every $j$ we have  the following diagram, where the vertical sequence is the twisted Euler sequence restricted to $S$:
$$
\xymatrix{
&0\ar[d]\\
0\ar[r]
&\pi^*\Omega^1_{\bP^2}\otimes L_1^{ j} \ar[d]\ar[r]
&\Omega^1_S\otimes L_1^{ j} \ar[r]
&L_1^{ j-3} \otimes\cO_R\to 0\\
&L_1^{ j-1}\otimes H^0(S,L_1) \ar[d]\\
&L_1^{ j}\ar[d]\\
&0
}
$$
For $j \ge 7$ this diagram gives
$H^1(S,\Omega^1_S\otimes L_1^{ j})=0$.
If $j=1$ we get the following exact sequence:
$$
\xymatrix{
0\to H^1(S,\Omega^1_S\otimes L_1)\ar[r]
&H^1(R,L_1^{-2}\otimes\cO_R)\ar[r]
&H^2(S,\pi^*\Omega^1_{\bP^2}\otimes L_1) \ar@{=}[d]\ar[r]
&0\\
&&H^2(S,\cO_S)\otimes H^0(S,L_1)
}
$$
which gives $h^1(S,\Omega^1_S\otimes L_1)
=h^1(R,L_1^{-2}\otimes\cO_R)-3=18$.

If $j=2$ we have 
$$
h^1(S,\pi^*\Omega^1_{\bP^2}\otimes L_1^{ 2})
= \mathrm{corank} \bigl[ \mathrm{Sym}^2H^0(S,L_1)
\to H^0(S,L_1^{2}) \bigr]=0
$$
and the following exact sequence:
$$
0\to H^1(S,\pi^*\Omega^1_{\bP^2}\otimes L_1^{2})
\longrightarrow H^1(S,\Omega^1_S\otimes L_1^{2})
\longrightarrow H^1(R,L_1^{-1}\otimes\cO_R)\to 0
$$
which gives $h^1(S,\Omega^1_S\otimes L_1^{2})
= 0+h^1(R,L_1^{-1}\otimes\cO_R)=15$.

If $3\le j \le 6$ then
$$h^1(S,\pi^*\Omega^1_{\bP^2}\otimes
L_1^{j})=0$$
thus $H^1(S,\Omega^1_S\otimes L_1^{j})\cong
H^1(R,L_1^{j-3}\otimes\cO_R)$, and the conclusion is clear.
\end{proof}

\section{Extendability and graded deformations of cones}\label{S:extendability}

Consider a projective scheme $X\subset \bP^r$   and let
$A=R/I_X$ be its homogeneous coordinate ring,
where $R=\mathbb{C}[X_0,\dots,X_r]$
and $I_X$ is the saturated homogeneous ideal of $X$ in $\P^r$.
The  \emph{ affine cone} over $X$ is
$$
 CX:=\mathrm{Spec}(A)\subset \mathbb{A}^{r+1}
 $$
 and the \emph{projective cone} over $X$ is 
 $$\ol{CX}:=\mathrm{Proj}(A[t])\subset \bP^{r+1}.
 $$

 Recall the following standard definitions. The scheme $X$ is \emph{projectively
   normal}, resp.\ \emph{arithmetically Cohen-Macaulay},
 resp.\ \emph{arithmetically Gorenstein} if the local ring of  $CX$ at
 the vertex is integrally closed, resp.\ Cohen-Macaulay, resp.\ Gorenstein.   Also recall that if   $X$ is  normal and  arithmetically Cohen-Macaulay then it is projectively normal.

   The deformation theory of $CX$ is controlled by the cotangent modules $T^1_{CX}$ and $T^2_{CX}$, which are graded 
    because of the $\mathbb{C}^*$-action on $A$. We will only need
    the explicit description of the first one.

\begin{prop}\label{P:schless}
Let $X\subset \bP^r$  be a non-degenerate 
scheme of pure dimension $d\ge 1$. Consider  the following conditions:
\begin{compactenum}[\normalfont (a)]
\item\label{cond:aCM}
  $X$ is arithmetically Cohen--Macaulay (aCM for short).

\item\label{cond:projnorm}
  $X$ is projectively normal.

\end{compactenum}
If either \eqref{cond:aCM} or \eqref{cond:projnorm}
holds then we have an exact sequence
of graded modules:
    \begin{equation}\label{E:T12}
      \bigoplus_{k\in \mathbb{Z}}  H^0(X,\restr {T_{\bP^r}} {X}  (k))
      \longrightarrow
  \bigoplus_{k\in \mathbb{Z}} H^0(X,N_{X/\bP^r}(k))
  \longrightarrow  T^1_{CX}\to 0.
\end{equation}
\end{prop}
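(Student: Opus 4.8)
The plan is to start from the Lichtenbaum--Schlessinger presentation of the first cotangent module of the affine cone. Since $R=\C[X_0,\dots,X_r]$ is a polynomial ring, hence smooth, one has $T^1_R=T^2_R=0$ and therefore a right-exact sequence of graded $A$-modules
\[
\mathrm{Der}_\C(R,A)\xrightarrow{\ \phi\ }\Hom_A(I_X/I_X^2,A)\longrightarrow T^1_{CX}\longrightarrow 0,
\]
where $\phi$ sends a derivation $D\colon R\to A$ to its restriction to $I_X$, which factors through $I_X/I_X^2$ because $D(I_X^2)\subseteq I_X\cdot A=0$. The content of the proposition is then to identify the middle term with $\bigoplus_k H^0(N_{X/\bP^r}(k))$ and the image of $\phi$ with the image of $\bigoplus_k H^0(T_{\bP^r}|_X(k))$. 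Both identifications amount to comparing a finitely generated graded module $M$ with the section module $\Gamma_*(\widetilde M)=\bigoplus_k H^0(X,\widetilde M(k))$ of its sheafification, the discrepancy being governed by the local cohomology modules $H^0_{\mathfrak m}(M)$ and $H^1_{\mathfrak m}(M)$ at the irrelevant ideal $\mathfrak m$.

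For the normal module, $N_{CX}:=\Hom_A(I_X/I_X^2,A)$ sheafifies to $N_{X/\bP^r}$, so the natural map $N_{CX}\to\Gamma_*(N_{X/\bP^r})$ is an isomorphism as soon as $\depth_{\mathfrak m}N_{CX}\ge 2$. I would derive this depth bound from the hypotheses: a presentation $F_1\to F_0\to I_X/I_X^2\to 0$ exhibits $N_{CX}$ as the kernel of a map of free $A$-modules, hence as a second syzygy. When $A$ is Cohen--Macaulay (case (a)) a second syzygy has depth $\ge\min(2,\dim A)$, and when $A$ is merely normal (case (b)) the dual of any module into an $(S_2)$-ring is again $(S_2)$. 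Since $\dim A=d+1\ge 2$, either hypothesis forces $\depth_{\mathfrak m}N_{CX}\ge 2$, so $H^0_{\mathfrak m}=H^1_{\mathfrak m}=0$ and the middle term is identified.

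On the tangent side, $\mathrm{Der}_\C(R,A)$ is the free module $A(1)^{r+1}$, which sheafifies to $\mathcal O_X(1)^{r+1}$. Here the Euler derivation $E=\sum X_i\partial_i$ is the key: by Euler's relation $E(f)=(\deg f)\,f\equiv 0$ in $A$ for $f\in I_X$, so $E\in\ker\phi$ and $\phi$ factors through the cokernel of the radial inclusion $A\xrightarrow{E}A(1)^{r+1}$. Sheafifying gives exactly the restricted Euler sequence $0\to\mathcal O_X\to\mathcal O_X(1)^{r+1}\to T_{\bP^r}|_X\to 0$, which remains exact because $T_{\bP^r}$ is locally free. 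Using $A=\Gamma_*(\mathcal O_X)$ (again from $\depth_{\mathfrak m}A\ge 2$) and applying $\Gamma_*$ to this sequence, I would match $\mathrm{im}(\phi)$ with the image of $\Gamma_*(T_{\bP^r}|_X)\to\Gamma_*(N_{X/\bP^r})$, the first map of the asserted sequence, giving $T^1_{CX}=\coker\bigl[\bigoplus_k H^0(T_{\bP^r}|_X(k))\to\bigoplus_k H^0(N_{X/\bP^r}(k))\bigr]$.

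The step I expect to be the main obstacle is precisely this last image comparison. Applying $\Gamma_*$ to the restricted Euler sequence produces a connecting map into $\bigoplus_k H^1(X,\mathcal O_X(k))=H^2_{\mathfrak m}(A)$, so $\Gamma_*(T_{\bP^r}|_X)$ can be strictly larger than the cokernel of $A\xrightarrow{E}A(1)^{r+1}$, and one must check that this extra part dies in $\Gamma_*(N_{X/\bP^r})$, i.e.\ that $\mathrm{im}(\phi)$ really is all of $\mathrm{im}\bigl[\Gamma_*(T_{\bP^r}|_X)\to\Gamma_*(N_{X/\bP^r})\bigr]$. When $d\ge 2$ and $X$ is aCM this is immediate since $H^2_{\mathfrak m}(A)=0$; in the remaining cases (curves, or normal but non-aCM surfaces) $H^2_{\mathfrak m}(A)$ may be nonzero, and one has to push the local-cohomology bookkeeping, using the left-exactness of $\Gamma_*$ together with the four-term sheaf sequence $0\to T_X\to T_{\bP^r}|_X\to N_{X/\bP^r}\to\mathcal T^1_X\to 0$, to see that the discrepancy is absorbed. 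This is exactly the place where Schlessinger's nonsingular computation must be extended with care to the singular setting.
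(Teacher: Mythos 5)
Your proposal follows essentially the same route as the paper's proof: the paper likewise starts from the presentation $H^0(CX,T_{\mathbb{A}^{r+1}}|_{CX})\to H^0(CX,N_{CX/\mathbb{A}^{r+1}})\to T^1_{CX}\to 0$, identifies the first two terms with the direct sums of twisted sections on $X$ by restricting to the punctured cone $W=CX\setminus\{v\}$ (using that these sheaves are reflexive and $CX$ is $S_2$, hence they have depth $\ge 2$ at the vertex --- this is your local-cohomology/second-syzygy argument in geometric dress), and then factors the resulting map through $\bigoplus_k H^0(X,T_{\bP^r}|_X(k))$ via the Euler sequence. The one step you single out as the ``main obstacle'' --- that $\bigoplus_k H^0(T_{\bP^r}|_X(k))$ may strictly contain the image of $A(1)^{r+1}$ when $H^2_{\mathfrak m}(A)\neq 0$, so that exactness of \eqref{E:T12} in the middle is not automatic outside the aCM case of dimension $\ge 2$ --- is a genuine point, but the paper's proof does not treat it either: it exhibits the factorization in a commutative diagram and concludes directly, so your bookkeeping concern is not resolved there and would have to be supplied (it is harmless in the degree $-1$ applications, where the relevant multiplication maps are surjective).
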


   \begin{proof}
Let $v\in CX$ be the vertex, $W=CX\setminus \{v\}$ and let
$\pi:W\longrightarrow X$ be the projection. By definition we have an
exact  sequence:
  \begin{equation}\label{E:T1exseq}
     H^0(CX,\restr {T_{\mathbb{A}^{r+1}}} {CX})\longrightarrow H^0(CX,N_{CX/\mathbb{A}^{r+1}}) \longrightarrow T^1_{CX}\to 0.
  \end{equation}
We assume that \eqref{cond:aCM} or
\eqref{cond:projnorm} holds.
Then $CX$ verifies Serre's condition $S_2$ at the vertex.
The two sheaves $F$ respectively involved in the two first terms of
\eqref{E:T1exseq} are reflexive,
each being the dual of a coherent sheaf,
hence they have  depth $\ge 2$ at $v$ as well by
\cite[Prop.~1.3]{HartSRS}
(for the implication we use, it is enough that the $X$ from the
notation of ibid.\ be $S_2$, as the proof given there shows).
Therefore
$$
H^0(CX,F)\cong H^0(W,\restr F {W}).
$$
Thus  \eqref{E:T1exseq} induces an exact sequence
$$
H^0(W,\restr {T_{\mathbb{A}^{r+1}}} {W})\longrightarrow H^0(W,N_{W/\mathbb{A}^{r+1}}) \longrightarrow T^1_{CX}\to 0.
$$
As in the proof of \cite[Lemma 1]{mS73}, one sees that
$$
H^0(W, \restr {T_{\mathbb{A}^{r+1}}} {W}  )=
\bigoplus_{k\in \mathbb{Z}} H^0(X,\cO_X(k+1))^{r+1}
$$
and
$$
H^0(W,N_{W/\mathbb{A}^{r+1}})=
\bigoplus_{k\in \mathbb{Z}} H^0(X,N_{X/\bP^r}(k)).
$$
Then we have a commutative diagram
\[
\xymatrix@C=10pt@R=15pt{
  H^0(W,\restr {T_{\mathbb{A}^{r+1}}} {W})
  \ar[rr] \ar@{=}[d]
  &&
  H^0(W,N_{W/\mathbb{A}^{r+1}})
  \ar@{=}[d]
  \\
  \bigoplus\limits_{k\in \mathbb{Z}} H^0(X,\cO_X(k+1))^{r+1}
  \ar[r]^(.54)\phi
  &
  \bigoplus\limits_{k\in \mathbb{Z}} H^0(X,  \restr {T_{\bP^r}}
  {X}(k))
  \ar[r]
  &
  \bigoplus\limits_{k\in \mathbb{Z}} H^0(X,N_{X/\bP^r}(k))
}
\]
(in which the map $\phi$
comes from the Euler exact sequence),
and \eqref{E:T12} is proved.
\end{proof}

Considering the degree $-1$ pieces of the exact sequences
\eqref{E:T12}, we get: 
    
\begin{cor}\label{C:exseq-1}
  In the notation of Proposition~\ref{P:schless},
  if \eqref{cond:aCM} or \eqref{cond:projnorm} holds,
% Let $X\subset \bP^r$ be  non-degenerate. Assume that $X$ is either aCM
% or projectively normal.
then there is an exact sequence:
\begin{equation}\label{E:yy}
H^0(X, \restr {T_{\bP^r}} X (-1))\longrightarrow
H^0(X,N_{X/\bP^r}(-1))\longrightarrow T^1_{CX,-1}\to
0.
% \label{E:alpha}
\end{equation}
\end{cor}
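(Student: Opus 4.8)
The plan is to obtain \eqref{E:yy} simply by extracting the degree $-1$ graded piece of the exact sequence \eqref{E:T12} established in Proposition~\ref{P:schless}. Since all three terms in \eqref{E:T12} are graded modules and the maps are maps of graded modules (they are homogeneous of degree $0$, coming as they do from the $\mathbf{C}^*$-action on $A$), taking a fixed graded component is an exact functor. Thus the degree $-1$ part of the exact sequence of graded modules remains exact, and \eqref{E:yy} follows immediately.

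The only point requiring a word of care is the identification of the degree $-1$ components of the first two terms. For the middle and right-hand terms this is transparent: the degree $-1$ piece of $\bigoplus_k H^0(X, N_{X/\bP^r}(k))$ is $H^0(X, N_{X/\bP^r}(-1))$, and the degree $-1$ piece of $T^1_{CX}$ is by definition $T^1_{CX,-1}$. For the left-hand term I would note that $\bigoplus_k H^0(X, \restr{T_{\bP^r}}{X}(k))$ is graded with its degree $-1$ piece equal to $H^0(X, \restr{T_{\bP^r}}{X}(-1))$; this is consistent with the grading used in the proof of Proposition~\ref{P:schless}, where the source $\bigoplus_k H^0(X,\cO_X(k+1))^{r+1}$ maps via the Euler sequence to $\bigoplus_k H^0(X, \restr{T_{\bP^r}}{X}(k))$, both being graded so that the displayed map $\phi$ preserves degree.

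There is essentially no obstacle here: the corollary is a formal consequence of the proposition, and the main content has already been done. If anything, the one thing I would double-check is that the grading conventions are uniform across the three terms, so that ``degree $-1$'' means the same index in each; once that bookkeeping is confirmed, exactness of the truncated sequence is automatic because exactness of a sequence of graded modules is equivalent to exactness in each degree separately.
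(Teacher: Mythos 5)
Your proposal is correct and is exactly the paper's argument: the corollary is obtained by taking the degree $-1$ graded piece of the exact sequence \eqref{E:T12} of Proposition~\ref{P:schless}, using that exactness of a sequence of graded modules is checked degreewise. Your extra care about the grading conventions is sound but the identifications are indeed the transparent ones.
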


The following corollary will be important in our applications.
It applies in the cases under consideration in this article, because
embedded $K3$ surfaces, Gorenstein weighted projective spaces of
dimension $3$ in their anticanonical embedding, and their linear curve sections are arithmetically Gorenstein.

\begin{cor}\label{c:alpha-N-ineq}
Let $X \subset \P^r$ be either  aCM  of pure dimension $\ge 2$  or
arithmetically Gorenstein of pure dimension 1 and positive arithmetic genus. Then  $\alpha(X) \le \dim (T^1_{CX,-1})$.
\end{cor}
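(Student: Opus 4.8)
The plan is to recover $\alpha(X)$ from the degree $-1$ part of the
exact sequence \eqref{E:yy} in Corollary~\ref{C:exseq-1}, and then
compare it with $\dim T^1_{CX,-1}$ by bounding the size of the image of
the first map. Recall that, by definition,
$\alpha(X) = h^0(X, N_{X/\P^r}\otimes L^{-1}) - r - 1
= h^0(X, N_{X/\P^r}(-1)) - r - 1$,
so the middle term of \eqref{E:yy} has dimension exactly
$\alpha(X) + r + 1$. Since \eqref{E:yy} is exact, we immediately get
\[
\dim T^1_{CX,-1}
= h^0(X, N_{X/\P^r}(-1)) - \dim\bigl(\mathrm{image}\bigr)
= \alpha(X) + r + 1 - \dim\bigl(\mathrm{image}\bigr),
\]
where $\mathrm{image}$ denotes the image of
$H^0(X, \restr{T_{\P^r}}{X}(-1)) \to H^0(X, N_{X/\P^r}(-1))$.
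Thus the desired inequality $\alpha(X) \le \dim T^1_{CX,-1}$ is
\emph{equivalent} to showing that the image of this first map has
dimension at most $r+1$.

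First I would analyse the map $H^0(X, \restr{T_{\P^r}}{X}(-1)) \to
H^0(X, N_{X/\P^r}(-1))$ using the restricted Euler sequence on $X$,
\[
0 \to \O_X(-1) \to \O_X^{\oplus(r+1)} \to \restr{T_{\P^r}}{X}(-1) \to 0,
\]
obtained by twisting the Euler sequence by $\O_X(-1)$. Taking global
sections, $H^0(X, \O_X^{\oplus(r+1)}) = \C^{r+1}$ surjects onto
$H^0(X, \restr{T_{\P^r}}{X}(-1))$ provided $H^1(X,\O_X(-1))=0$; even
without that vanishing, the image of $H^0(\O_X^{\oplus(r+1)})$ inside
$H^0(\restr{T_{\P^r}}{X}(-1))$ is a quotient of $\C^{r+1}$, hence of
dimension at most $r+1$. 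The key point is that the composite
$H^0(X,\restr{T_{\P^r}}{X}(-1)) \to H^0(X,N_{X/\P^r}(-1))$ factors
through this $(r+1)$-dimensional space: indeed the normal-bundle map is
the restriction of the tangent-space map, and the sections of
$\restr{T_{\P^r}}{X}(-1)$ coming from $H^0(\O_X(-1))$ map to zero in the
normal bundle (they are the infinitesimal homotheties, tangent to the
cone direction and hence trivial modulo $T_X$). Therefore the image of
the first map in \eqref{E:yy} coincides with the image of the
$(r+1)$-dimensional space $\C^{r+1} = H^0(\O_X^{\oplus(r+1)})$, which
forces $\dim(\mathrm{image}) \le r+1$, giving the inequality.

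The hard part will be making the factorization argument clean, in
particular justifying that the relevant sections of
$H^0(\restr{T_{\P^r}}{X}(-1))$ that could have dimension exceeding
$r+1$ all die in the normal bundle; this is exactly where the
hypothesis that $X$ is non-degenerate and the structure of the Euler
sequence are used, and one must be careful that the map out of
$H^0(\O_X(-1))$ lands in the \emph{tangent} directions of $X$ rather
than contributing to the normal bundle. I expect the aCM (resp.\
arithmetically Gorenstein of pure dimension $1$ and positive genus)
hypotheses to enter precisely through the vanishing
$H^1(X,\O_X(-1)) = 0$, which guarantees both that \eqref{E:yy} holds
(via Corollary~\ref{C:exseq-1}) and that the image of $\C^{r+1}$ in
$H^0(\restr{T_{\P^r}}{X}(-1))$ is all of it, so that no larger space can
slip through. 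Once the bound $\dim(\mathrm{image}) \le r+1$ is
established, the inequality $\alpha(X) \le \dim T^1_{CX,-1}$ follows
immediately from the displayed dimension count above, and the proof
concludes.
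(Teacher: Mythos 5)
Your overall strategy --- reduce the inequality to the bound $\dim(\mathrm{image}) \le r+1$ for the first map in \eqref{E:yy}, and obtain that bound by showing $h^0(X,\restr{T_{\P^r}}{X}(-1)) \le r+1$ via the twisted Euler sequence --- is exactly the paper's strategy, and your argument is complete in the case of $X$ aCM of pure dimension $\ge 2$: there the aCM hypothesis gives $H^1(X,\O_X(-1))=0$, so $H^0(X,\restr{T_{\P^r}}{X}(-1)) \cong H^0(X,\O_X(1))$ has dimension $r+1$ and the dimension count goes through.

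The gap is in the curve case. For $X$ of pure dimension $1$ and positive arithmetic genus, the vanishing $H^1(X,\O_X(-1))=0$ that you ``expect'' to be the content of the hypotheses simply fails: by duality $H^1(X,\O_X(-1)) \cong H^0(X,\omega_X(1))^\vee$, and for $X$ arithmetically Gorenstein of positive genus one has $\omega_X=\O_X(\nu)$ with $\nu\ge 0$, so this space is $H^0(X,\O_X(\nu+1))^\vee \ne 0$. Your fallback --- that the sections of $\restr{T_{\P^r}}{X}(-1)$ not lifting to $\C^{r+1}$ ``die in the normal bundle'' --- is not justified and is not what happens: the justification you give concerns the subsheaf $\O_X(-1)$ (the kernel of the Euler sequence, whose global sections vanish anyway since $X$ is nondegenerate), not the potential excess of $H^0(\restr{T_{\P^r}}{X}(-1))$ over the image of $\C^{r+1}$ measured by the coboundary into $H^1(X,\O_X(-1))$; and there is no general reason a section of $\restr{T_{\P^r}}{X}(-1)$ outside that image should map to zero in $H^0(X,N_{X/\P^r}(-1))$. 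The paper closes this case by showing there is no excess at all: the coboundary lands in $\ker(\trsp\mu)$, where $\mu: H^0(\O_X(1))\otimes H^0(\omega_X)\to H^0(\omega_X(1))$ is the multiplication map; arithmetic Gorenstein-ness identifies $\mu$ with $H^0(\O_X(1))\otimes H^0(\O_X(\nu))\to H^0(\O_X(\nu+1))$, which is surjective by projective normality, so $\trsp\mu$ is injective, $\ker(\trsp\mu)=0$, and again $h^0(X,\restr{T_{\P^r}}{X}(-1))=r+1$. You need this (or an equivalent) argument to handle the dimension-one case.
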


\noindent
(See Theorem~\ref{T:lvovski} for the definition of $\alpha$).

\begin{proof}
The twisted Euler exact sequence
\[
  0 \to \O_{X}(-1) \to
  H^0(X,\O(1)) \otimes \O_X \to
  \restr {T_{\P^r}} X (-1)
  \to 0
\]
induces the exact sequence
\begin{multline}\label{twEuler}
  0 \to H^0(X,\O(1))
  \to H^0 (\restr {T_{\P^r}} X (-1))
  \\\to
  H^1 (\O_{X}(-1))
  \to H^0(\O_X(1)) \otimes H^1(\O_X).
\end{multline}

When $\dim(X)>1$,
since $X$ is arithmetically Cohen--Macaulay we have
that $H^1 (\O_{X}(-1)) = 0$, and therefore
by \eqref{twEuler},
\[
  H^0 (\restr {T_{\P^r}} X (-1))
  \cong
  H^0(X,\O(1)).
\]
Then \eqref{E:yy} gives a presentation
\[
  H^0(X,\O(1))
  \longrightarrow
  H^0(X,N_{X/\bP^r}(-1))\longrightarrow T^1_{CX,-1}\to
  0,
\]
from which the desired inequality  follows at once. 

When $\dim(X)=1$,
% $\dim(X)=1$ and $X$ is arithmeticaly Gorenstein of positive genus.
\eqref{twEuler}
gives the following exact sequence of vector spaces,
\[
  0 \to H^0(X,\O(1))
  \to H^0 (\restr {T_{\P^r}} X (-1)) \to
  \ker (\trsp \mu) \to 0,
\]
where $\mu$ is the multiplication map
\[
  \mu: H^0(\O_X(1)) \otimes H^0(\omega_X) \to H^0(\omega_X(1)).
\]
If $X$ is arithmetically Gorenstein of positive genus we have
$\omega_X=\cO_X(\nu)$ for some $\nu \ge 0$, hence $\mu$ is the
multiplication map
$$
H^0(\O_X(1)) \otimes H^0(\cO_X(\nu)) \to H^0(\cO_X(\nu+1))
$$
which is surjective, and we conclude as before.
\end{proof}

If $X$ is smooth, in most cases the leftmost map of \eqref{E:yy} is
injective, so that in fact $\alpha(X) = \dim (T^1_{CX,-1})$.
The same holds in the cases under investigation in this article:

\begin{cor}\label{c:alpha-N-nice}
Let $X \subset \P^r$ be either a $K3$ surface with at most canonical
singularities, or a Gorenstein weighted projective $3$-space in its
canonical embedding. Then $\alpha(X) = \dim (T^1_{CX,-1})$.
\end{cor}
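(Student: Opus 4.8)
The plan is to invoke Corollary~\ref{c:alpha-N-ineq}, which applies here since in both cases $X$ is arithmetically Gorenstein of dimension $\ge 2$, in order to reduce the statement to the vanishing of global sections of a twisted tangent sheaf. Indeed, the proof of that corollary shows that when $\dim X \ge 2$ the isomorphism $H^0(X,\restr{T_{\bP^r}}X(-1)) \cong H^0(X,\O(1))$ holds and that \eqref{E:yy} becomes a presentation
\[
  H^0(X,\O(1)) \xrightarrow{\ \psi\ } H^0(X,N_{X/\bP^r}(-1)) \longrightarrow T^1_{CX,-1} \to 0.
\]
Since $X$ is projectively normal and non-degenerate we have $\dim H^0(X,\O(1)) = r+1$, while $\alpha(X) = h^0(N_{X/\bP^r}(-1)) - (r+1)$; hence $\alpha(X)$ and $\dim T^1_{CX,-1}$ coincide precisely when $\psi$ is injective. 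The first step is to identify $\ker\psi$: dualizing the conormal sequence gives the left-exact sequence of sheaves $0 \to T_X \to \restr{T_{\bP^r}}X \to N_{X/\bP^r}$, where $T_X = \mathcal{H}om(\Omega^1_X,\O_X)$ is the reflexive tangent sheaf, so that $\ker\psi = H^0(X, T_X \otimes L^{-1})$ with $L = \O_X(1)$. Everything thus reduces to proving $H^0(X, T_X \otimes L^{-1}) = 0$.

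For a Gorenstein weighted projective $3$-space $X = \P$ this is a direct computation with the generalized Euler sequence $0 \to \O_\P \to \bigoplus_i \O_\P(a_i) \to T_\P \to 0$. In the anticanonical embedding $L = \O_\P(s)$, so I would twist by $\O_\P(-s)$ and take cohomology, using $H^0(\O_\P(a_i - s)) = 0$ (because $a_i < s$) together with $H^1(\O_\P(-s)) = 0$ (cohomology of line bundles on a weighted projective space being concentrated in degrees $0$ and $\dim\P$); this forces $H^0(T_\P \otimes \O_\P(-s)) = 0$ at once.

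For a $K3$ surface $X$ with at most canonical singularities the argument is less immediate, and this is where the main obstacle lies. Since $\omega_X \cong \O_X$, the symplectic pairing on the smooth locus identifies $T_X$ with the sheaf of reflexive differentials $\Omega^{[1]}_X := (\Omega^1_X)^{\vee\vee}$, so that $H^0(X,T_X\otimes L^{-1}) = H^0(X, \Omega^{[1]}_X \otimes L^{-1})$. Let $\pi : \wt X \to X$ be the minimal crepant resolution, which is a smooth $K3$ surface. As canonical surface singularities are Du Val, reflexive differentials extend across the exceptional locus, \ie $\pi_*\Omega^1_{\wt X} = \Omega^{[1]}_X$; by the projection formula this yields $H^0(X,\Omega^{[1]}_X\otimes L^{-1}) = H^0(\wt X, \Omega^1_{\wt X}\otimes \pi^*L^{-1})$. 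Finally I would rule out a non-zero section of the latter: it would produce an inclusion $\pi^*L \hookrightarrow \Omega^1_{\wt X}$, whereas $\Omega^1_{\wt X}$ is $\mu$-semistable of slope $0$ with respect to every polarization of the $K3$ surface $\wt X$, while $\pi^*L$ is big and nef and hence has strictly positive slope against any ample class, a contradiction. The delicate points are thus the identification of $\ker\psi$ with $H^0(T_X \otimes L^{-1})$ via the dualized conormal sequence, and, in the singular $K3$ case, the correct handling of the singularities through the extension property of reflexive differentials together with semistability of the tangent bundle of the resolution. Granting these, $H^0(X,T_X\otimes L^{-1}) = 0$, whence $\alpha(X) = \dim T^1_{CX,-1}$.
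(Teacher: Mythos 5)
Your reduction is exactly the paper's: both proofs boil the statement down to the injectivity of the leftmost map in \eqref{E:yy}, identify its kernel inside $H^0(X,T_X\otimes L^{-1})$ via the dualized conormal sequence, and use the Euler sequence plus $H^1(\O_X(-1))=0$ to get $h^0(\restr{T_{\P^r}}X(-1))=r+1$. Where you diverge is in how the vanishing $H^0(X,T_X\otimes L^{-1})=0$ is established. For the weighted projective space the paper uses Serre duality, $H^0(\P,T_\P(-1))=H^3(\P,\Omega^1_\P)^\vee=0$ (note $\omega_\P=\O(-1)$ in the anticanonical embedding), whereas you use the generalized Euler sequence on $\P$; your computation is correct, with the small caveat that on a \wps{} the sheaves $\O(a_i)$ are only reflexive, so one should say the sequence is exact off a codimension-$\ge 2$ locus of reflexive sheaves, which still suffices for the $H^0$ argument. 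For the $K3$ case the paper disposes of the matter in one line: $H^0(X,T_X)=0$ for a $K3$ with canonical singularities, and since $L$ is effective and nonzero a section of $T_X\otimes L^{-1}$ would produce one of $T_X$. Your route through the crepant resolution, the extension of reflexive differentials across Du Val singularities, and the $\mu$-semistability of $\Omega^1_{\wt X}$ is valid and in fact makes explicit what the paper's one-liner leaves implicit (why the singular $K3$ has no vector fields), at the cost of invoking heavier machinery. Both approaches are sound; the paper's is shorter, yours is more self-contained on the singular $K3$ side.
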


\begin{proof}
The kernel of the leftmost map of \eqref{E:yy} is contained in 
$H^0(X,T_X(-1))$.
If $X$ is a $K3$ surface then $H^0(X,T_X)=0$, hence
$H^0(X,T_X(-1))=0$ as well.
If $X$ is a Gorenstein weighted projective space, then
$H^0(X,T_X(-1))=H^3(X,\Omega^1_X)^\vee$ by Serre duality, hence it is
zero in this case as well. It follows that the leftmost map of
\eqref{E:yy} is injective.

On the other hand, it follows from the Euler exact sequence and the
vanishing of $H^1(X,\O_X(-1))$ that
$H^0(X, \restr {T_{\bP^r}} X (-1)) \cong H^0(X,\O_X(1))^\vee$
which has dimension $r+1$, hence the result.
\end{proof}

The following will also be of fundamental importance for us.

\begin{prop}\label{P:T1K3}
Let $X\subset \bP^g$ be a $K3$ surface with at worst canonical
singularities. Then
$T^1_{CX,-1}=\Ext^1_X(\Omega^1_X,\cO_X(-1))$.
\end{prop}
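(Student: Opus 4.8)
The plan is to start from Corollary~\ref{C:exseq-1}: since a $K3$ surface with canonical singularities is aCM of dimension $2$, that corollary presents $T^1_{CX,-1}$ as the cokernel of the natural map
\[
  \psi\colon H^0(X,\restr{T_{\P^g}}{X}(-1)) \longrightarrow H^0(X,N_{X/\P^g}(-1)),
\]
so it suffices to identify $\coker(\psi)$ with $\Ext^1_X(\Omega^1_X,\cO_X(-1))$. The crucial structural input is that canonical surface singularities are du Val, hence hypersurface (in particular lci) singularities; consequently $X\subset\P^g$ is a regular embedding and the conormal sequence is a \emph{short} exact sequence
\[
  0 \to N^\vee_{X/\P^g} \to \restr{\Omega^1_{\P^g}}{X} \to \Omega^1_X \to 0,
\]
with $N^\vee_{X/\P^g}=\mathcal{I}/\mathcal{I}^2$ locally free (whereas $\Omega^1_X$ fails to be locally free exactly at the singular points).

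Next I would apply $\Hom_X(-,\cO_X(-1))$ to this sequence. Both outer terms are locally free, so $\sExt^q(-,\cO_X(-1))$ vanishes for $q>0$ on them and the local-to-global spectral sequence gives $\Ext^i(\restr{\Omega^1_{\P^g}}{X},\cO_X(-1))=H^i(X,\restr{T_{\P^g}}{X}(-1))$ together with $\Ext^i(N^\vee,\cO_X(-1))=H^i(X,N_{X/\P^g}(-1))$. The long exact $\Ext$-sequence then reads
\[
  H^0(\restr{T_{\P^g}}{X}(-1)) \xrightarrow{\ \psi'\ } H^0(N_{X/\P^g}(-1)) \to \Ext^1_X(\Omega^1_X,\cO_X(-1)) \to H^1(X,\restr{T_{\P^g}}{X}(-1)),
\]
where $\psi'$, being induced by the inclusion $N^\vee\hookrightarrow\restr{\Omega^1_{\P^g}}{X}$, is $H^0$ of the dual morphism $\restr{T_{\P^g}}{X}\to N_{X/\P^g}$ and hence coincides with the map $\psi$ of Corollary~\ref{C:exseq-1}. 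Therefore $\Ext^1_X(\Omega^1_X,\cO_X(-1))$ contains $\coker(\psi)=T^1_{CX,-1}$, with equality precisely when $H^1(X,\restr{T_{\P^g}}{X}(-1))=0$.

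The computational heart is thus the vanishing $H^1(X,\restr{T_{\P^g}}{X}(-1))=0$. I would deduce it from the restricted twisted Euler sequence
\[
  0 \to \cO_X(-1) \to \cO_X^{\,g+1} \to \restr{T_{\P^g}}{X}(-1) \to 0
\]
together with $H^1(X,\cO_X)=0$: this forces $H^1(\restr{T_{\P^g}}{X}(-1))$ to inject into $\ker\bigl[H^2(\cO_X(-1))\to H^2(\cO_X)^{g+1}\bigr]$, and by Serre duality (a $K3$ with canonical singularities is Gorenstein with $\omega_X\cong\cO_X$) this last map is the transpose of the evaluation map $H^0(\cO_X)^{g+1}\to H^0(\cO_X(1))$ sending $e_i\mapsto x_i$. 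Since $X$ is non-degenerate and projectively normal, hence linearly normal, the $x_i$ form a basis of $H^0(\cO_X(1))$, so this map is an isomorphism and the kernel above vanishes. Combined with the previous paragraph this yields $T^1_{CX,-1}=\Ext^1_X(\Omega^1_X,\cO_X(-1))$.

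The main obstacle is conceptual rather than computational, and lies in the first paragraph: because $\Omega^1_X$ is not locally free, one cannot replace $\Ext^1_X(\Omega^1_X,\cO_X(-1))$ by $H^1(X,T_X(-1))$ as in the smooth case of \cite{CLM98} (cf.\ Lemma~\ref{L:CLM98}); the local contributions $\sExt^1(\Omega^1_X,\cO_X)=\T^1_X$ concentrated at the singular points are exactly what the global $\Ext$ absorbs. The only step that could fail for a worse singularity type is the short-exactness of the conormal sequence, and it is precisely the canonical ($=$ du Val, hence lci) hypothesis that secures it; everything else is then formal once $H^1(X,\restr{T_{\P^g}}{X}(-1))=0$ is established.
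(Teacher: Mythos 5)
Your argument is correct and follows essentially the same route as the paper: dualize the (short exact, by the lci/du Val hypothesis) conormal sequence into $\cO_X(-1)$ to get the four-term $\Ext$ sequence, kill $H^1(X,\restr{T_{\P^g}}{X}(-1))$ via the twisted Euler sequence, and compare with the presentation of $T^1_{CX,-1}$ from Corollary~\ref{C:exseq-1}. The only difference is that you spell out the Serre-duality/linear-normality justification of the $H^1$ vanishing and the identification of the two connecting maps, which the paper leaves implicit.
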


\begin{proof}
Taking
$\Hom(\ .\ , \O_X(-1))$
of the conormal exact sequence of $X$ in $\P^g$, and using the fact
that the conormal sheaf of $X$ in $\P^g$ and $\restr {\Omega^1_{\P^g}}
{X}$ are locally free, we obtain the
exact sequence
\begin{multline}
\label{E:yyprime}
H^0(X,\restr {T_{\bP^g}} {X}(-1))\longrightarrow
H^0(X,N_{X/\bP^g}(-1))\longrightarrow \Ext^1_X(\Omega^1_X,\cO_X(-1)) 
\\
\longrightarrow H^1(X,\restr {T_{\bP^g}} {X}(-1)).
\end{multline}
From the restricted and twisted Euler sequence
$$
0\longrightarrow \cO_X(-1)\longrightarrow
H^0(X,\cO_X(1))^\vee\otimes\cO_X\longrightarrow
\restr {T_{\bP^g}} {X} (-1)\longrightarrow 0
$$
we deduce that $H^1(X, \restr {T_{\bP^g}} {X} (-1))=0$.
Therefore comparing the two exact sequences
\eqref{E:yyprime} and \eqref{E:yy}
gives the assertion.
\end{proof}
    
     \begin{parag}\label{p:sweepout}
Consider now  an extension $\wt X$ of a projectively normal
$X\subset \bP^r$.
In such a situation, we let $e_{X/\tilde X} \in
\Ext^1(\Omega^1_X,\O_X(-1))$ be the class of the conormal exact sequence
\[
\xymatrix{
0\ar[r]& \O_X(-1) \ar[r] 
& \bigl. {\Omega^1_{\tilde X}} \bigr|_X  \ar[r]& \Omega^1_X \ar[r]&0.
}\]

If the extension $\tilde X$ is non-trivial, \ie it is not a cone
over $X$, then we can also associate to it a family of deformations of $\ol{CX}$, the
projective cone over $X$, as follows. Let $X=\wt X \cap H$, where
$H\cong \bP^r\subset \bP^{r+1}$ is a hyperplane. Consider in
$\bP^{r+2}$ the projective cone $\ol{C\wt X}$ and the pencil of
hyperplanes $H_t$ with center $H$.  Let $H_o$ be the hyperplane
containing the vertex $v$ of $\ol{C\wt X}$. Then $H_o\cap \ol{C\wt
X}=\ol{CX}$, while $H_t\cap \ol{C\wt X}\cong \wt X$ for all $t\ne
o$. After blowing up $X$ we obtain a family
   $$
   f:\mathrm{Bl}_X(\ol{C\wt X}) \longrightarrow \bP^1
   $$
   which is flat because $\wt X$ is projectively normal, with
$f^{-1}(t)=H_t\cap \ol{C\wt X}$. By restriction we get a deformation
of the affine cone $CX$.
% which is a
% smoothing of the vertex $v\in CX$.
If $\wt X$ is smooth then this deformation is a smoothing of
$\ol{CX}=f^{-1}(o)$.  This is a classical construction called
\emph{sweeping out the cone} (see, e.g., \cite[(7.6)(iii)]{hP74}).
Algebraically the above construction has the following
description. Let $\wt X= \mathrm{Proj}(\c A)$, where $\c
A=\mathbb{C}[X_0,\dots,X_r,t]/J$. Then
 $$
 A= \c A/t\c A = \mathbb{C}[X_0,\dots,X_r]/I
 $$
 where $I=J/tJ$. Consider $C\wt X=\mathrm{Spec}(\c A)\subset
\mathbb{A}^{r+2}$. The pencil of parallel hyperplanes $V(t)\subset
\mathbb{A}^{r+2}$ has as projective closure the pencil $\{H_t\}$
considered before. Therefore the morphism
 $$
 \phi:\mathrm{Spec}(\c A) \longrightarrow \mathrm{Spec}(\mathbb{C}[t])
 $$
is the corresponding family of deformations of $CX$.
It is clear that
if $e_{X/\wt X}\in T^1_{CX,-1}$ (e.g., $X$ is nonsingular or is a
singular $K3$ surface) then the first order deformation of $X$
associated to $\phi$ is
$e_{X/\wt X}$.
Note that, by construction, $e_{X/\wt X}$ \emph{is
unobstructed both as a first order deformation of $CX$ and of
$\ol{CX}$.}
    \end{parag}

The upshot of the above construction is that the datum of an extension
$\tilde X$ of $X$ gives a deformation of the cone over $X$. In fact
the two objects correspond to the same ring $\A$: the former is
$\mathrm{Proj}(\A)$ and the latter is $\mathrm{Spec}(\A)$.
We shall now state a result of Wahl which will be crucial in what
follows.  It may be
considered as a reverse sweeping out the cone, in that it produces an
extension of $X$ from a first order deformation of the cone over $X$.
We first need the following standard definition.

\begin{definition}\label{def:N2}
Let $X\subset \bP^r$  be a non-degenerate projectively normal   scheme
of pure dimension $\ge 1$.
we say that $X$ \emph{has the property $N_2$} or \emph{satisfies
  $N_2$}
if its homogeneous coordinate
ring $A=R/I_X$  has a minimal graded presentation over
$R:=\mathbb{C}[X_0,\dots,X_r]$ of the form: 
\begin{equation}\label{E:presentA1}
\xymatrix{
R(-3)^a \ar[r]^-\psi&R(-2)^b \ar[r]^-\phi & R\ar[r]&A \to 0.
}    
\end{equation}
\end{definition}

 \begin{thm}[\cite{jW97},   Proof of Thm.\ 7.1 and  Remark 7.2]\label{T:wahl}
   Let $X\subset \bP^r$  be a non-degenerate projectively normal
   scheme of pure dimension $\ge 1$ and let $A$ be its homogeneous
   coordinate ring. Consider the following two conditions:
   \begin{itemize}
       \item[(a)] $X$ has the property $N_2$;
       \item[(b)]  $T^2_{A,k}=0$ for all $k \le -2$.
   \end{itemize}
   
If (a) holds then 
any first order deformation of $CX$ of degree
$-1$ lifts to at most one  graded deformation $\mathcal{A}$
over $\mathbb{C}[t]$, with $\deg(t)=1$.  Moreover
$Y:=\mathrm{Proj}(\mathcal{A})\subset
\bP^{r+1}=\mathrm{Proj}[t,X_0,\dots, X_r]$ is an extension of
$X:=\mathrm{Proj}(A)=\mathrm{Proj}(\mathcal{A})\cap
\{t=0\}\subset \bP^r$ which is unique up to projective
automorphisms of $\bP^{r+1}$ fixing every  point of $\bP^r= \{t=0\}$.   
       
If both (a) and (b) hold then every
first order deformation of $CX$ of degree
$-1$ lifts to a graded deformation $\mathcal{A}$ as above.
   \end{thm}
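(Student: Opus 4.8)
The plan is to realize the given first-order deformation $\xi\in T^1_{CX,-1}$ as the leading term of a genuine graded deformation of the homogeneous coordinate ring $A=R/I_X$ over $\mathbb{C}[t]$ (with $\deg t=1$), following the Pinkham--Schlessinger procedure of lifting a deformation of a cone one order at a time in $t$ and controlling the successive obstructions through the graded module $T^2_A$ \cite{hP74,mS73}. Concretely I would present a flat graded deformation over $\mathbb{C}[t]/(t^{n+1})$ as a quotient $\mathcal{A}_n=\bigl(R[t]/(t^{n+1})\bigr)/\mathcal{I}_n$, with $\mathcal{I}_n$ generated by lifts $F_i^{(n)}=f_i+\sum_{k=1}^{n}t^k g_i^{(k)}$ of the generators $f_i$ of $I_X$, starting from $\mathcal{A}_0=A$ and from the $\mathcal{A}_1$ prescribed by $\xi$. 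Flatness of $\mathcal{A}_n$ over $\mathbb{C}[t]/(t^{n+1})$ is the condition that the syzygies among the $f_i$ lift order by order, and the obstruction to producing a flat $\mathcal{A}_{n+1}$ over $\mathbb{C}[t]/(t^{n+2})$ is the usual well-defined class in $T^2_A$.

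The crux is the degree bookkeeping. Since $\deg t=1$ and we want $\mathcal{A}$ graded with each $F_i$ homogeneous of degree $\deg f_i$, homogeneity forces $g_i^{(k)}$ to have degree $\deg f_i-k$ in $X_0,\dots,X_r$; this is exactly what it means for $\xi$ to be a deformation of degree $-1$, the first correction $g_i^{(1)}$ lowering the degree by one. Propagating this grading through the obstruction calculus, the class obstructing the lift to order $t^{n+1}$ is pure of internal degree $-(n+1)$, that is, it lies in $T^2_{A,-(n+1)}$. Hence for every $n\ge 1$ this obstruction lies in $T^2_{A,k}$ with $k=-(n+1)\le -2$, and it vanishes by hypothesis (b); for $n=0$ there is nothing to obstruct since $\xi$ is given. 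Consequently the tower $\{\mathcal{A}_n\}_{n\ge 0}$ prolongs to all orders and yields a formal graded deformation over $\mathbb{C}[[t]]$.

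It then remains to upgrade this formal deformation to an honest graded $\mathbb{C}[t]$-algebra. Here I would use the positivity of $\deg t$ together with property $N_2$: by (a) each $f_i$ has degree $2$, so $g_i^{(k)}$ has degree $2-k$ and vanishes for $k\ge 3$; thus each $F_i=f_i+t g_i^{(1)}+t^2 g_i^{(2)}$ is polynomial in $t$, the lifted (linear) syzygies terminate for the same reason, and the formal deformation is already a finitely presented deformation $\mathcal{A}=R[t]/\mathcal{I}$ over $\mathbb{C}[t]$. Taking $Y=\mathrm{Proj}(\mathcal{A})$ and recovering $X$ as the section $\{t=0\}$ exhibits $\mathcal{A}$ as a graded deformation as in the statement, and the reverse sweeping-out-the-cone description of \ref{p:sweepout} identifies $Y$ with an extension of $X$ inducing $\xi$. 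The step I expect to be most delicate is precisely the bookkeeping of the preceding paragraph: one must check that the Pinkham--Schlessinger obstruction at order $t^{n+1}$ genuinely sits in internal degree $-(n+1)$, so that (b) applies, and then reconcile the resulting $\mathbb{C}[[t]]$-deformation with a true $\mathbb{C}[t]$-deformation through the finiteness forced by $N_2$ and $\deg t>0$; the remaining input is the standard formal theory of graded deformations of cones, which may be quoted.
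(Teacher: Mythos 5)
The paper does not actually prove this statement: it is quoted from Wahl \cite{jW97} (proof of Thm.~7.1 and Rmk.~7.2), so there is no internal proof to compare against. Judged on its own terms, your existence argument under (a)$+$(b) is essentially the standard one and is sound in outline: for a graded deformation with $\deg t=1$ the correction $g_i^{(k)}$ to a generator $f_i$ must have internal degree $\deg f_i-k$, the obstruction to passing from order $t^{n+1}$ to $t^{n+2}$ is homogeneous of internal degree $-(n+1)\le -2$ for $n\ge 1$, and hypothesis (b) kills it; gradedness then makes the resulting formal deformation automatically polynomial in $t$, since $g_i^{(k)}=0$ for $k\ge 3$ once the $f_i$ are quadrics.

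The genuine gap is the uniqueness half, which is the part the theorem asserts under (a) \emph{alone} and the part this paper actually leans on (it is what forces $X_0=V_0$ in the proof of Theorem~\ref{T:smoothing}). ``At most one lift'' requires that at each order $k\ge 2$ the set of admissible corrections be a torsor under $T^1_{A,-k}=0$, equivalently $H^0(N_{X/\bP^r}(-k))=0$ for $k\ge 2$ (the vanishing recorded after the theorem in the paper, via \cite[Lem.~1.1]{aK20}). Your degree count disposes of $k\ge 3$ (internal degree $2-k<0$), but at $k=2$ the candidate corrections are \emph{constants} $c_i$ assigned to the quadric generators, and ruling these out is exactly where the linear syzygies --- the full strength of $N_2$, not just quadratic generation --- must be used: a tuple $(c_i)$ defines a class in $T^1_{A,-2}$ precisely when $\sum_j l_{ij}c_j$ lies in $I_X$ for every linear syzygy $(l_{ij})$, and one must argue this forces $c=0$. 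Your proposal never engages with this order-$t^2$ ambiguity, so ``lifts to at most one graded deformation'' is not established. Relatedly, the residual ambiguity in choosing a representative $(g_i^{(1)})$ of the class $\xi\in T^1_{A,-1}$ (a coboundary by a degree $-1$ derivation, i.e.\ a constant vector field) is what produces the ``unique up to projective automorphisms of $\bP^{r+1}$ fixing $\bP^r$ pointwise'' clause; this too is left unaddressed.
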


It is one of the main results of \cite{ABS17} that condition (b) above
holds when $X$ is a canonical curve.
We shall use this and Theorem~\ref{T:CDS} to prove that the same holds
when $X$ is a Gorenstein \wps{} of dimension 3,
see Corollary~\ref{c:extend}.

For more details on the unicity statement, we refer to
\cite[Rmk.~4.8]{CDS20}. Note that assumption (a) implies that 
$H^0(N_{X/\P^r} (-k))=0$ for all $k\geq 2$, as we have already
observed.

   %%%%%%%%%%%%%%%%%%%%%%%%%%%%%%%%%%%%%%%%%%%
   
   %%%%%%%%%%%%%%%%%%%%%%%%%%%%%

\section{The deformation argument}\label{S:thearg}

We now come to our main technical result, and its application to
deformations of weighted projective spaces.

\begin{parag}
Let $(S,L)$ be a polarized
$K3$ surface with canonical singularities,
and $g=h^0(L)-1$.
A \emph{smoothing} of $(S,L)$ is a pair
$\bigl(p:\cS\to (\Delta,o), \L \bigr)$,
where $p$ is a smoothing of $S$ over an affine nonsingular pointed
curve $(\Delta,o)$ and $\L$ extends $L$, \ie
$L=\L(o):=\restr {\L}{p^{-1}(o)}$. 
There is a flat family of surfaces in $\bP^g$ associated to such
a smoothing:
  \begin{equation}\label{E:familycS}
 \xymatrix{\cS \ar@{^(->}[r]^-j\ar[dr]_-p& \bP^g\times \Delta\ar[d]^-{pr_2}\\
 &\Delta 
 }
  \end{equation}
where $j$ is defined by the sections of $\L$.  

We shall use the following notation:
%\begin{notation}
  $\Delta^\circ:= \Delta\setminus \{o\}$; \quad $\cS^\circ
  =\cS\setminus p^{-1}(o)$; \quad
  $p^\circ=\restr p {\cS^\circ}$;   \quad
  $\L^\circ = \restr {\L} {\cS^\circ}$. 
%\end{notation}

A \emph{relative extension} of $\cS\subset \bP^{g}\times \Delta$
consists of an $\X\subset \bP^{g+1}\times\Delta$, flat over
$\Delta$, together with a relative hyperplane $\c H\cong
\bP^{g}\times\Delta\subset \bP^{g+1}\times\Delta$ such that
$\X \cap\c H =\cS$ and $\X(t)$ is not a cone over $\cS(t)$ for all
$t\in \Delta$.
  Similarly, one defines relative extensions   of $\cS^\circ\subset \bP^{g}\times \Delta^\circ$.
\end{parag}

\begin{thm}\label{T:smoothing}
Let $S_0 \subset \P^g$ be a $K3$ surface, possibly with canonical
singularities, and $V_0 \subset \P^{g+1}$ be an %$3$-fold
extension of
$S_0$. 
Let $p: \cS \to \Delta$ be a smoothing of $S_0$ in
$\P^g$ as above, and assume that the following conditions hold:
\begin{compactenum}[(a)]
\item\label{ass:CDS} $g\geq 11$, and for all $t\in \Delta^\circ$ we
  have $\Cliff(S_t)>2$; 
\item\label{ass:N2} $S_0$ has the $N_2$ property;
\item\label{ass:ct} $t\in \Delta \mapsto \alpha(S_t)$ is constant.
\end{compactenum}
Then there exists a deformation of $V_0$ in $\P^{g+1}$ which is a
relative extension of $\cS \subset \P^g\times \Delta$.
\end{thm}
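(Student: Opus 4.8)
The plan is to translate the datum of the extension $V_0$ into a first-order graded deformation of the cone $CS_0$, to spread this deformation out over the whole family $\cS\to\Delta$ using the constancy of $\alpha$, and then to integrate it to a relative extension by invoking Theorem~\ref{T:CDS} on the smooth fibres together with the uniqueness part of Wahl's Theorem~\ref{T:wahl} to pin down the central fibre. First I would record the extension class: by the sweeping-out-the-cone construction of \ref{p:sweepout}, the extension $V_0$ of $S_0$ determines a class $e_0=e_{S_0/V_0}\in T^1_{CS_0,-1}=\Ext^1_{S_0}(\Omega^1_{S_0},\O_{S_0}(-1))$ (Proposition~\ref{P:T1K3}), which is unobstructed as a first-order deformation of both $CS_0$ and $\ol{CS_0}$; moreover $e_0\neq 0$ precisely because $V_0$ is not a cone over $S_0$.

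Next I would make the deformation space vary in a family. Forming the relative affine cone $\mathcal C\to\Delta$ over the family $\cS\to\Delta$ of \eqref{E:familycS}, its relative degree $-1$ cotangent module $\mathcal T:=T^1_{\mathcal C/\Delta,-1}$ is a coherent sheaf on $\Delta$ whose fibre at each $t$ is $T^1_{CS_t,-1}$. By Corollary~\ref{c:alpha-N-nice} this fibre has dimension $\alpha(S_t)$, which is constant by the last hypothesis; hence by the base-change theorem $\mathcal T$ is locally free of rank $\alpha$ on $\Delta$. Since $e_0\neq 0$ lives in the fibre $\mathcal T(o)$, after shrinking $\Delta$ I can extend it to a nowhere-vanishing section $e\in\Gamma(\Delta,\mathcal T)$ with $e(o)=e_0$; this is a first-order relative graded deformation of $\mathcal C$ of degree $-1$.

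Over the punctured disc I would integrate using Theorem~\ref{T:CDS}. For $t\in\Delta^\circ$ the surface $S_t$ is a smooth $K3$ surface with $g\geq 11$ and $\Cliff(S_t)>2$, so the nonzero class $e(t)$ integrates to an extension $X_{e(t)}$ of $S_t$, realised as a linear section of the universal extension. Carrying out this construction in family—cutting the relative universal extension over $\Delta^\circ$ by the relative linear span determined by the nowhere-vanishing section $e$—yields a relative extension $\X^\circ\subset\P^{g+1}\times\Delta^\circ$ of $\cS^\circ$, flat over $\Delta^\circ$. It then remains to fill in the central fibre. As $\Delta$ is a smooth curve, by properness of the Hilbert scheme the family $\X^\circ$ extends, after shrinking, to a flat family $\X\subset\P^{g+1}\times\Delta$; by flatness its central fibre $X_o$ meets the relative hyperplane $\mathcal H$ in $S_0$. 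The relative extension class of $\X$ is a section of $\mathcal T$ agreeing with $e$ on $\Delta^\circ$, hence equal to $e$ everywhere since $\mathcal T$ is locally free, so in particular it takes the value $e(o)=e_0\neq 0$ at $o$; thus $X_o$ is a genuine extension of $S_0$ and not a cone. By the $N_2$ hypothesis on $S_0$ and the uniqueness statement of Theorem~\ref{T:wahl}, the extension of $S_0$ with class $e_0$ is unique up to projective automorphisms of $\P^{g+1}$ fixing $\P^g$, whence $X_o\cong V_0$ and $\X$ is the desired relative extension deforming $V_0$.

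The main obstacle is this last step, namely controlling the flat limit at $o$. Both ingredients are essential there: the constancy of $\alpha$, which guarantees that $\mathcal T$ is locally free so that the relative extension class cannot drop at the central point; and the $N_2$ property of the (singular) central fibre, which supplies Wahl's uniqueness and thereby forces the limit to be $V_0$ rather than some other extension or a cone. It is precisely here that the unobstructedness of $e_0$ recorded in \ref{p:sweepout} enters. A secondary technical point, which I would check in the course of the argument, is that the extension produced fibrewise by Theorem~\ref{T:CDS} can indeed be assembled into the flat relative family $\X^\circ$ over $\Delta^\circ$; this follows from the functoriality of the universal-extension construction, the local freeness of $\mathcal T$, and the uniqueness of each $X_{e(t)}$ up to automorphisms fixing $\P^g$.
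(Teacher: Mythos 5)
Your proposal is correct and follows essentially the same route as the paper: extend the class $e_{S_0/V_0}$ to a section of a relative deformation sheaf (locally free by the constancy of $\alpha$ plus base change), integrate fibrewise over $\Delta^\circ$ via Theorem~\ref{T:CDS}, pass to the limit over $o$, and identify the central fibre with $V_0$ using the $N_2$ hypothesis and the uniqueness in Theorem~\ref{T:wahl}. The only cosmetic differences are that the paper works with the relative sheaf $\sExt^1_p(\Omega^1_{\cS/\Delta},\L^{-1})$, for which the base-change map is supplied by Lange's results, rather than with the relative degree $-1$ cotangent module of the relative cone (the two being identified fibrewise by Proposition~\ref{P:T1K3}), and that it takes the Zariski closure of $\X^\circ$ rather than invoking properness of the Hilbert scheme.
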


\begin{proof}
We have a base change map \cite{hL83}: 
%$$\tau(o):R^1p_*\left[Hom_\cS\left(\Omega^1_{\cS/\Delta},
%\L_1^{-i}\right)\right]_o\otimes  
%k(o)\longrightarrow H^1(S,T_S(-\restr {K_\P} S))$$ 
$$
\tau(o): \sExt^1_p(\Omega^1_{\cS/\Delta},\L^{-1})_o\otimes k(o)
\longrightarrow \Ext^1_{S_0} \bigl( \Omega^1_{S_0}, L_0^{-1} \bigr)
$$
with $L_0=\L(o) = \O_{S_0}(1)$
(note that $\Omega^1_{\cS/\Delta}$ is  $\Delta$-flat because
%$E_{\cS/\X}(t)$ is exact for all $t\in \Delta$, since
$p$ is flat and has reduced fibres).
%, and therefore
%$\mathrm{Tor}_1^{\cO_\Delta}(\Omega^1_{\cS/\Delta},k(t))=0$
% for all $t$;
% \th{ma abbiamo davvero bisogno dell'esattezza di $E_{\cS/\X}(t)$?
%   cioè, non basta che $p$ abbia fibre ridotte?
%   secondo me la cosa giusta da fare è partire senza $\X$ e poi
%   costruire subito quella giusta.
% Senno dobbiamo partire da una $\X'$ scelta a caso, usarla per
% dimostrare la piattezza, e poi ragionare,
% così come facevi nella prima versione della dimostrazione, riprodotta sotto. Un po' sgradevole, ma si può
% fare}).
By our assumption \eqref{ass:ct} and the results in \S
\ref{S:extendability}, the function 
 $$
t \in \Delta \longmapsto  \dim \bigl[
 \Ext^1_{\cS(t)}(\Omega^1_{\cS(t)},\L(t)^{-1}) \bigr]
$$
is constant, hence
$\sExt^1_p(\Omega^1_{\cS/\Delta},\L^{-1})$ is locally free and
$\tau(o)$ is an isomorphism. It follows that there exists
a section $E\in
\sExt^1_p(\Omega^1_{\cS/\Delta},\L^{-1})$ such that
$\tau(o)(E)=e_{S_0/V_0}$
(see \ref{p:sweepout} for the definition of $e_{S_0/V_0}$).

For all $t \in \Delta^\circ$, the smooth $K3$ surface $\cS(t)\subset
\bP^{g}$ satisfies the assumptions of Theorem~\ref{T:CDS} by
\eqref{ass:CDS}, and therefore there exists  a unique extension
$\cS(t)\subset\X(t)\subset\bP^{g+1}$ such that
$e_{\cS(t)/\X(t)}=E(t)$.
We then consider
$\X^\circ :=\bigcup_{t\in\Delta^\circ}\X(t)$:
it is a relative extension of $\cS^\circ$,
and its Zariski closure
$\X=\ol{\X^\circ}\subset \bP^{g+1}\times \Delta$
is a relative extension of $\cS$.

Let $X_0=\X(o)$. One has
$e_{S_0/X_0} = E(o) = e_{S_0/V_0}$,
so assumption \eqref{ass:N2} and Theorem~\ref{T:wahl} imply that
$X_0=V_0$, which ends the proof.
\end{proof}

We now set up the situation in which we will apply the above
Theorem~\ref{T:smoothing}. The notation is the same as in
\ref{p:Gwps}.

\begin{parag}\label{p:application}
Consider $\bP=\bP(a_0,a_1,a_2,a_3)$ a \wps{} with Gorenstein canonical
singularities, and $(S,L)$ a general anticanonical divisor of $\P$,
so $L = - \restr {K_\P} S$. Let $i_S$ be the divisibility of $L$ in
$\Pic(S)$, and
$L_1$ be the primitive line bundle on $S$ such that
$L=i_S L_1$.
Thus $(S,L_1)\in \Kprim_{g_1}$, where $g_1=h^0(L_1)-1$. 

We may then consider a deformation 
$\bigl(p:\cS\to (\Delta,o), \L_1 \bigr)$
of $(S,L_1)$ to general primitive polarized smooth $K3$
surfaces of genus $g_1$. 
To such a smoothing there is associated a flat family of surfaces in
$\bP^{g_1}$ as in \eqref{E:familycS}, and also
an analogous family in $\P^g$ defined by the sections of $\L_1 ^{i_S}$:
\begin{equation}\label{E:familycS2}
 \xymatrix{\cS \ar@{^(->}[r]^-{j_{i_S}}\ar[dr]_-p& \bP^{g_\bP}\times \Delta\ar[d]^-{pr_2}\\
 &\Delta 
 }
\end{equation}
where $g=h^0(S,L)-1$
($j_{i_S}$ is the $i_S$-uple Veronese re-embedding of $\cS$).

We shall apply Theorem~\ref{T:smoothing} to $S_0=S \subset \P^g$, and
$V_0 = \P \subset \P^{g+1}$ in its anticanonical embedding.
In this case assumption \eqref{ass:CDS} is always satisfied, as a
direct computation shows.
Assumption \eqref{ass:N2} is always satisfied as well, because $\P$
has the property $N_2$ by Proposition~\ref{p:N2} below.
Assumption \eqref{ass:ct} however does not hold in all cases: we
compute $\alpha(S,L)$ in Proposition~\ref{p:alpha-val} below, and
compare it with $\alpha(S',L')$ for a general $(S',L') \in \K_g^{i_S}$,
in other words with $\alpha(S',(L_1')^{i_S})$ for a general
$(S',L_1') \in \Kprim_{g_1}$.

When $\alpha(S,L)=\alpha(S',L')$ holds, we conclude that $\P$ deforms
to a threefold extension of a general $K3$ surface $(S',L') \in
\K_g^{i_S}$. 
This happens exactly for cases $\#i$,
$i \in \{ 1, \dots, 7, 9\}$, see Table~\ref{tab:alpha} below. 
We refer to Section~\ref{S:examples} for a more precise
description of the output in each of these cases.
\end{parag}

%%%%%%%%%%%%%%%%%%%%%%%%%%%%%

\section{Explicit computations on WPS}\label{S:comput}

The main object of this section is to analyze which Gorenstein
projective spaces enjoy the required properties for
Theorem~\ref{T:smoothing} to apply, as described in
\ref{p:application} above.
We carry this out by explicit computations using the software
Macaulay2 \cite{M2}.
As a bonus, we obtain the number of times each Gorenstein weighted
projective $3$-space is extendable.

\begin{prop}
\label{p:N2}
Let $\P \subset \P^{g+1}$ be a $3$-dimensional Gorenstein
weighted projective space, considered in its anticanonical embedding.
Then $\P$ is projectively normal, and its homogeneous coordinate ring $A=R/I_\P$  has a
minimal resolution of the form  

\[
  \cdots \longrightarrow
  R(-3)^{\beta_2} \longrightarrow
  R(-2)^{\beta_1} \longrightarrow
  R  \longrightarrow A \longrightarrow 0
\]
with $\beta_1,\beta_2$ as indicated in Table~\ref{tab:N2} below. 
In particular $\P$ has the $N_2$ property.
\end{prop}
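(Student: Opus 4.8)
The plan is to establish the claim by a direct computation of minimal free resolutions, exactly as announced, rather than by any theoretical argument. The statement asserts three things for each of the $14$ Gorenstein weighted projective $3$-spaces $\P$ in its anticanonical embedding: projective normality, the specific Betti numbers $\beta_1,\beta_2$ of the start of a minimal graded free resolution of $A=R/I_\P$, and the $N_2$ property. Since $N_2$ (Definition~\ref{def:N2}) is precisely the statement that the minimal resolution begins $R(-3)^{\beta_2}\to R(-2)^{\beta_1}\to R\to A\to 0$, once we exhibit a resolution of this shape the $N_2$ property follows immediately; so the heart of the matter is computing the graded Betti numbers and checking that $I_\P$ is generated in degree $2$ with linear first syzygies.

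First I would make the ambient embedding explicit. Each $\P=\P(a_0,a_1,a_2,a_3)$ is Gorenstein, so $\omega_\P=\O(-s)$ with $m\mid s$, and the anticanonical sheaf $\O(s)=\O(-K_\P)$ is a very ample line bundle (a multiple of the generator $\O(m)$ of $\Pic(\P)$). The global sections of $\O(s)$ are the monomials of weighted degree $s$ in the variables $x_0,\dots,x_3$ of weights $a_0,\dots,a_3$; these furnish the coordinates $X_0,\dots,X_{g+1}$ on $\P^{g+1}$ and realize $\P\hookrightarrow\P^{g+1}$ as the $(g+1)$-fold image of $\Proj$ under this very ample system. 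The homogeneous ideal $I_\P$ is then the kernel of the ring map $R=\C[X_0,\dots,X_{g+1}]\to\bigoplus_k H^0(\P,\O(ks))$, which is computed by an elimination (this is the technique attributed in the acknowledgements to Laurent Busé): list the degree-$s$ monomial generators, map $X_j$ to the corresponding monomial in the weighted ring $\C[x_0,x_1,x_2,x_3]$, and saturate to obtain the saturated ideal of the projective variety.

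Next I would feed each of these $14$ ideals to Macaulay2 and compute the minimal graded free resolution of $A=R/I_\P$, reading off $\beta_1=\dim(I_\P)_2$ (the number of quadric generators) and $\beta_2$ (the number of linear first syzygies), and checking that the resolution has the claimed shape at its first two steps, i.e.\ that there are no generators of $I_\P$ in degree $>2$ and no first syzygies of degree $>3$. Projective normality is then obtained as a byproduct: since $\P$ is arithmetically Cohen--Macaulay (being Cohen--Macaulay by \ref{p:wps}(3) and embedded by a complete linear system with all higher graded pieces complete) and normal, it is projectively normal by the standard fact recalled in \S\ref{S:extendability}; alternatively one reads off from the Macaulay2 computation that $A$ agrees with the saturated homogeneous coordinate ring and that the resolution begins with a single copy of $R$. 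I would record the resulting numerical data in Table~\ref{tab:N2}.

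The main obstacle is not conceptual but bookkeeping and verification: correctly writing down the very ample anticanonical system and the elimination presentation of $I_\P$ for each of the $14$ spaces, and trusting the resulting symbolic computation. The genuinely human part — the part the computer does not settle on its own — is confirming that the start of the resolution really is minimal and has the asserted linear shape (no hidden higher-degree generators or syzygies), and that the monomial system chosen does embed $\P$ as a projectively normal, arithmetically Gorenstein variety rather than some projection or degeneration thereof. Once the $14$ resolutions are produced and their first two Betti numbers tabulated, the $N_2$ property and the two auxiliary assertions follow directly from Definition~\ref{def:N2} and the general position of the resolution, completing the proof.
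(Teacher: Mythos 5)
Your proposal is correct and takes essentially the same route as the paper: both compute $I_\P$ by elimination from the graph of the monomial anticanonical embedding and then read off $\beta_1,\beta_2$ from a Macaulay2 computation of the first syzygies. The only cosmetic difference is the justification of projective normality --- the paper deduces it from the fact that $\P$ has canonical curves as linear sections (citing \cite[Thm.~5.1]{CDS20}), whereas you argue via arithmetic Cohen--Macaulayness plus normality; both are valid.
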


\begin{center}
\begin{tabular}{|l|l|l|l|l|l|l|}
\hline
\# & weights        & $g_1$ & \raisebox{0mm}[3.25mm]{$i_S$} &  $g$ & $\beta_1$ & $\beta_2$ \\
\hline
 1 & (1, 1, 1, 3)   &   2 & 6 & 37 &     595 &   13056 \\
 2 & (1, 1, 4, 6)   &   2 & 6 & 37 &     595 &   13056 \\
 3 & (1, 2, 2, 5)   &   2 & 5 & 26 &     276 &    4025 \\
 4 & (1, 1, 1, 1)   &   3 & 4 & 33 &     465 &    8960 \\
 5 & (1, 1, 2, 4)   &   3 & 4 & 33 &     465 &    8960 \\
 6 & (1, 3, 4, 4)   &   3 & 3 & 19 &     136 &    1344 \\
 7 & (1, 1, 2, 2)   &   4 & 3 & 28 &     325 &    5175 \\
 8 & (1, 2, 6, 9)   &   4 & 3 & 28 &     325 &    5175 \\
 9 & (2, 3, 3, 4)   &   4 & 2 & 13 &      55 &     320 \\
10 & (1, 4, 5, 10)  &   6 & 2 & 21 &     171 &    1920 \\
11 & (1, 2, 3, 6)   &   7 & 2 & 25 &     253 &    3520 \\
12 & (1, 3, 8, 12)  &   7 & 2 & 25 &     253 &    3520 \\
13 & (2, 3, 10, 15) &  16 & 1 & 16 &      91 &     715 \\
14 & (1, 6, 14, 21) &  22 & 1 & 22 &     190 &    2261 \\
\hline
\end{tabular}
\captionof{table}{First Betti numbers of Gorenstein weighted
  projective spaces}
\label{tab:N2}
\end{center}

Of course $\beta_1 = \binom {g-2} 2$, since curve linear sections of
$\P$ are canonical curves of genus $g$.

\begin{proof}
The projective normality follows from the fact that $\P$ has canonical
curves as linear sections, see \cite[Thm.~5.1]{CDS20}.
For property $N_2$,
we explicitly compute the ideal of $\P$ in $\P^{g+1}$ using
Macaulay2, then compute the first syzygies of this ideal, and
eventually check that they are of the asserted shape.
This computation goes as follows.

Let $\P=\P(a_0,a_1,a_2,a_3)$ endowed with weighted homogeneous coordinates
$\bx = (x_0,x_1,x_2,x_3)$.
First, one writes down the list $(M_0,\ldots,M_{g+1})$ of all 
monomials in $\bx$ of weighted degree $s=a_0+a_1+a_2+a_3$, which form
a basis of $H^0(\P,-K_\P)$.
Then the ideal of the graph $\Gamma \subset \P \times \P^{g+1}$ of
the embedding $\P \subset \P^{g+1}$ is
\[
  I_\Gamma = \bigl(
  y_i-M_i(\bx),
  \quad i=0,\dots,g+1 \bigr),
\]
with $(y_0,\dots,y_{g+1})$ homogeneous coordinates on $\P^{g+1}$.
One obtains the ideal $I_\P$ of $\P \subset \P^{g+1}$ by eliminating
$\bx$ from $I_\Gamma$, which may be performed efficiently using a
Gröbner basis algorithm.
Eventually, there is a Macaulay2 function which computes step by step
the syzygies of this ideal.
We provide the explicit Macaulay2 commands implementing this procedure
at the end of this article.
\end{proof}

In principle one may use any basis of $H^0(\P,-K_\P)$ to compute the
ideal, but the computations turn out to work faster with a monomial
basis. In fact doing so one takes advantage of $\P$ being a toric
variety. There is also a Macaulay2 function computing the whole
resolution of a graded ideal, but we have not been able to run these
computations successfully for $I_\P$ (apart for \#13) because the
complexity was too large.

In principle it is possible to compute all Betti numbers of any
lattice ideal $I_\Lambda$ as the dimensions of the reduced homology
groups of a simplicial complex explicitly construct from the lattice
$\Lambda$, see, e.g., \cite[Thm.~9.2]{MS}
or \cite[Chap.~5]{MT+}. It seems to us however that this leaves
non-trivial computations to be performed, which we haven't tried to
carry out.

\begin{prop}\label{p:alpha-val}
Let $\P \subset \P^{g+1}$ be a $3$-dimensional Gorenstein
weighted projective space in its anticanonical embedding, and $(S,L)$
be a general hyperplane section of $\P$.
We write $i_S$ for the divisibility of $L=\restr {-K_\P} S$ in $\Pic(S)$.
Let $(S',L')$ be a general member of $\K_g^{\iota_S}$.
Then the values of $\alpha(S)$ and $\alpha(S')$ are as indicated in
Table~\ref{tab:alpha} below.
Moreover, $\alpha(\P)=\alpha(S)-1$, and $\alpha(C)=\alpha(S)+1$ for
$C$ a general curve linear section of $\P$.
\end{prop}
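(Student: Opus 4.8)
The plan is to funnel all four invariants through the single quantity $h^0(N(-1))$, using that $\P$, $S$ and $C$ are arithmetically Gorenstein. By Corollary~\ref{c:alpha-N-nice}, $\alpha(\P)=\dim T^1_{C\P,-1}$ and $\alpha(S)=\dim T^1_{CS,-1}$. For the general curve section $C$, which is smooth (a general codimension-$2$ linear section misses the $1$-dimensional singular locus of $\P$) and canonically embedded, I would upgrade Corollary~\ref{c:alpha-N-ineq} to an equality: the kernel of the left-hand map of \eqref{E:yy} lies in $H^0(C,T_C(-1))=H^0(C,\omega_C^{-2})=0$, so $\alpha(C)=\dim T^1_{CC,-1}$ as well. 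By the degree $-1$ piece of the Schlessinger sequence (Corollary~\ref{C:exseq-1}) and the injectivity of the Euler map established exactly as in Corollary~\ref{c:alpha-N-nice}, each invariant equals $h^0(N_{X/\P^r}(-1))-(r+1)$, with $r$ the dimension of the ambient space ($r=g+1,g,g-1$ for $\P,S,C$). Thus the two relations $\alpha(\P)=\alpha(S)-1$ and $\alpha(C)=\alpha(S)+1$ reduce to the claim that $h^0(N(-1))$ is one and the same integer for $\P\subset\P^{g+1}$, $S\subset\P^g$ and $C\subset\P^{g-1}$, the shifts $\pm1$ arising purely from the drop in ambient dimension.

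To prove this constancy I would take $Y=X\cap H$ a general hyperplane section, first with $(X,Y)=(\P,S)$ and then $(S,C)$. For a transverse hyperplane section one has the standard isomorphism $\restr{N_{X/\P^r}}{Y}\cong N_{Y/\P^{r-1}}$ (the subbundle $N_{Y/X}=\O_Y(1)$ splitting off transversally), whence $h^0(N_{Y/\P^{r-1}}(-1))=h^0(\restr{N_{X/\P^r}}{Y}(-1))$. Restricting to $Y$ gives the sequence $0\to N_{X/\P^r}(-2)\to N_{X/\P^r}(-1)\to \restr{N_{X/\P^r}}{Y}(-1)\to0$, and since $\P$ and $S$ both satisfy $N_2$ (Proposition~\ref{p:N2}, the latter as a general hyperplane section of the former) the remark after Theorem~\ref{T:wahl} gives $H^0(N_{X/\P^r}(-2))=0$. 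Hence the restriction $H^0(N_{X/\P^r}(-1))\to H^0(\restr{N_{X/\P^r}}{Y}(-1))$ is injective, and it is an isomorphism precisely when $H^1(N_{X/\P^r}(-2))\xrightarrow{\ \ell\ }H^1(N_{X/\P^r}(-1))$ is injective for general $\ell$. This last surjectivity is the one cohomological point requiring care; I would settle it by general position on the graded module $\bigoplus_k H^1(N_{X/\P^r}(k))$, or, if that resists a clean argument, read it off the explicit Macaulay2 resolutions of Proposition~\ref{p:N2}. Granting it, $h^0(N(-1))$ is constant along $\P\supset S\supset C$ and the two relations follow.

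The value $\alpha(S')$ is the routine column. A general $(S',L')\in\K_g^{i_S}$ is a smooth $K3$ surface with $L'=(L_1')^{i_S}$ and $(S',L_1')\in\Kprim_{g_1}$, so $\alpha(S')=h^1(S',T_{S'}\otimes(L_1')^{-i_S})$. I would read this off Theorem~\ref{T:extveryample1} when $g_1\ge3$ and off Lemma~\ref{L:genus2} when $g_1=2$, feeding in the pairs $(g_1,i_S)$ recorded in Tables~\ref{tab:Gwps}--\ref{tab:N2} with $j=i_S$ (e.g.\ case \#1 has $g_1=2$, $i_S=6$, giving $\alpha(S')=1$).

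The genuine difficulty, and the main obstacle, is $\alpha(S)$ for the general \emph{anticanonical} divisor $S$, which carries $ADE$ singularities in every case except \#1 and \#4. By Proposition~\ref{P:T1K3}, $\alpha(S)=\dim\Ext^1_S(\Omega^1_S,\O_S(-1))$, and the local-to-global sequence shows this exceeds $h^1(S,T_S(-1))$ exactly by the contribution of the skyscraper sheaf $\sExt^1(\Omega^1_S,\O_S(-1))$ at the singular points; this local term is precisely the discrepancy $\alpha(S)-\alpha(S')$ that the proposition is built to detect (and that vanishes in the smooth cases, where $\alpha(S)=\alpha(S')$). For the singular cases I would compute $\alpha(S)=\dim T^1_{CS,-1}$ directly in Macaulay2 from the explicit homogeneous ideal of Proposition~\ref{p:N2}; where the size of the ambient $\P^g$ makes the surface computation infeasible, I would instead compute the cone invariant for the arithmetically Gorenstein curve section $C$ in the smaller $\P^{g-1}$ and transfer back through the relation established above. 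Isolating the local contribution and keeping these computations tractable is the crux of the proof, and is what forces the case-by-case verification recorded in Table~\ref{tab:alpha}.
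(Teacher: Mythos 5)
There is a genuine gap, and it sits exactly where you flagged it: the upper bound $\alpha(S)\le\alpha(\P)+1$ (equivalently, the surjectivity of the restriction $H^0(N_{\P/\P^{g+1}}(-1))\to H^0(N_{S/\P^{g}}(-1))$). Your reformulation of the two relations as the constancy of $h^0(N(-1))$ along $\P\supset S\supset C$ is correct, and your injectivity half --- restriction sequence plus $H^0(N(-2))=0$ from property $N_2$ --- is precisely the paper's Lemma~\ref{l:alpha-hyp}. But neither of your two routes to the other half goes through. ``General position'' does not give injectivity of multiplication by a general linear form on the graded module $\bigoplus_k H^1(N(k))$: that module can have socle in degree $-2$, and nothing you invoke rules this out; you have named the hard point without proving it. And your fallback --- computing $\dim T^1_{CS,-1}$ (or the analogous curve invariant) in Macaulay2 for a \emph{general} hyperplane or codimension-two section --- is exactly what the paper reports to be computationally infeasible: cutting by random linear forms destroys the toric structure and the Gr\"obner computations do not terminate.

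The paper's actual mechanism avoids both obstacles. It cuts $\P$ by the \emph{special} linear forms $l_0=x_7+x_{g+1}$ and $l_1=x_3+x_g$, so that the sections $S_0$ and $C_0$ remain toric (hence tractable), computes $\dim T^1_{CS_0,-1}=\alpha(\P)+1$ and $\dim T^1_{CC_0,-1}=\alpha(\P)+2$, and then sandwiches: $\alpha(S)\ge\alpha(\P)+1$ by Lemma~\ref{l:alpha-hyp}, while $\alpha(S)\le\alpha(S_0)\le\dim T^1_{CS_0,-1}$ by upper semicontinuity of $h^0(N(-1))$ in the family of hyperplane sections together with Corollary~\ref{c:alpha-N-ineq} (the inequality version is essential here, since $S_0$ is not a $K3$ surface and $C_0$ is not a smooth canonical curve, so the equality of Corollary~\ref{c:alpha-N-nice} is unavailable for them). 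This degeneration-plus-semicontinuity step is the missing idea in your proposal; without it, or a genuine proof that $H^1(N_{\P}(-2))\to H^1(N_{\P}(-1))$ is injective, the values of $\alpha(S)$ and $\alpha(C)$ in Table~\ref{tab:alpha} are not established. Your treatment of the $\alpha(S')$ column via Theorem~\ref{T:extveryample1} and Lemma~\ref{L:genus2} does match the paper.
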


\begin{center}
\begin{tabular}{|l|l|l|l|l||l|l|}
\hline
\raisebox{0mm}[3.5mm]{\#}
& weights        & $g_1$ & \raisebox{0mm}[3.25mm]{$i_S$} & $\alpha(S)$ & $\alpha(S')$ & 3-fold\\
\hline
  1 & (1, 1, 1, 3)   &   2 & 6 &         1  &       1                                    & \raisebox{0mm}[4mm]{$\P(1^3,3)$}\\
  2 & (1, 1, 4, 6)   &   2 & 6 &         1  &       1 & $\P(1^3,3)$\\
  3 & (1, 2, 2, 5)   &   2 & 5 &         3  &       3                                                  & $H_6 \subset \P(1^3,3,5)$\\
  4 & (1, 1, 1, 1)   &   3 & 4 &         1  &       1 & $\P^3$\\
  5 & (1, 1, 2, 4)   &   3 & 4 &         1  &       1 & $\P^3$\\
  6 & (1, 3, 4, 4)   &   3 & 3 &         4  &       4                                                     & $H_4 \subset \P(1^4,3)$\\
  7 & (1, 1, 2, 2)   &   4 & 3 &         1  &       1 & $\Q$\\
  8 & (1, 2, 6, 9)   &   4 & 3 &         2  &       1 & $\Q$\\
  9 & (2, 3, 3, 4)   &   4 & 2 &         6  &       6                                                   & $H_3 \subset \P^4$\\
 10 & (1, 4, 5, 10)  &   6 & 2 &         3  &       1 & $V_5$\\
 11 & (1, 2, 3, 6)   &   7 & 2 &         1  &       0 & does not exist\\
 12 & (1, 3, 8, 12)  &   7 & 2 &         2  &       0 & does not exist\\
 13 & (2, 3, 10, 15) &  16 & 1 &         3  &       0 & does not exist\\
 14 & (1, 6, 14, 21) &  22 & 1 &         2  &       0 & does not exist\\
\hline
\end{tabular}
\captionof{table}{Dimension of the weight $-1$ piece of $T^1$}
\label{tab:alpha}
\end{center}

In the table, we also indicate the general $3$-fold extension of $S'$,
with the following notation:
$\Q$ denotes the smooth $3$-dimensional quadric in $\P^4$; $H_d$
denotes a general degree $d$ hypersurface in the specified projective
space; 
$V_5$ denotes the degree $5$ Del Pezzo threefold, \ie the section of
the Grassmannian $\G(2,5)$ by a general $\P^6$ in the Plücker
embedding.
We refer to \cite{CDdouble,CDhigher} for these matters.

We will need the following lemma for the proof, which is a
generalization of a well-known fact when all involved varieties are
smooth. 

\begin{lemma}\label{l:alpha-hyp}
Let $\P\subset \P^{g+1}$ be a Gorenstein weighted projective space,
$S$ a hyperplane section of $\P$, and $C$ a hyperplane
section of $S$. Then one has
\[
  \alpha(C) \geq \alpha(S)+1 \geq \alpha(\P)+2.
\]
\end{lemma}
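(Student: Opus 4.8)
The plan is to unwind the definition of $\alpha$ into normal-sheaf data and to isolate a single inequality governing the passage to a general hyperplane section. Write $N_\P = N_{\P/\bP^{g+1}}$, $N_S = N_{S/\bP^g}$ and $N_C = N_{C/\bP^{g-1}}$, where $S\subset \bP^g$ and the canonical curve $C\subset \bP^{g-1}$. By the definition of $\alpha$ (see Theorem~\ref{T:lvovski}),
\[
\alpha(\P) = h^0(N_\P(-1)) - g - 2,\quad
\alpha(S) = h^0(N_S(-1)) - g - 1,\quad
\alpha(C) = h^0(N_C(-1)) - g,
\]
so the two asserted inequalities are equivalent to
\[
h^0(N_S(-1)) \ge h^0(N_\P(-1))
\quad\text{and}\quad
h^0(N_C(-1)) \ge h^0(N_S(-1)).
\]

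Both are instances of the following claim, applied to $(X,Y)=(\P,S)$ and to $(X,Y)=(S,C)$: if $X\subset \bP^N$ is an irreducible, projectively normal scheme satisfying $N_2$, and $Y = X\cap H$ is a general hyperplane section, viewed in $H = \bP^{N-1}$, then $h^0(N_{Y/\bP^{N-1}}(-1)) \ge h^0(N_{X/\bP^N}(-1))$. To prove the claim I would use two ingredients. First, for a transverse (general) hyperplane section the conormal sheaf restricts, giving an isomorphism $N_{Y/\bP^{N-1}} \cong N_{X/\bP^N}|_Y$; this comes from comparing the two conormal filtrations of $Y$ associated with $Y\subset \bP^{N-1}\subset \bP^N$ and $Y\subset X\subset \bP^N$, the point being that the composite $\O_Y(-1) \to N^\vee_{Y/\bP^N} \to N^\vee_{Y/X} = \O_Y(-1)$ is an isomorphism by transversality. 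Second, tensoring $0\to \O_X(-1)\to \O_X\to \O_Y\to 0$ by $N_{X/\bP^N}(-1)$ (which is exact since $N_{X/\bP^N}(-1)$ is torsion-free and the defining linear form is a nonzerodivisor) yields
\[
0 \to N_{X/\bP^N}(-2) \to N_{X/\bP^N}(-1) \to N_{X/\bP^N}(-1)|_Y \to 0,
\]
whence $h^0(N_{X/\bP^N}(-1)|_Y) \ge h^0(N_{X/\bP^N}(-1)) - h^0(N_{X/\bP^N}(-2))$. Since $X$ satisfies $N_2$, we have $h^0(N_{X/\bP^N}(-2)) = 0$ (as recorded after Theorem~\ref{T:wahl}), and combining this with the isomorphism gives the claim.

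To apply the claim I would note that $\P$ satisfies $N_2$ by Proposition~\ref{p:N2}, and that $S$, being a general hyperplane section of the arithmetically Cohen--Macaulay $\P$, is again arithmetically Cohen--Macaulay with the same graded Betti numbers, hence also satisfies $N_2$; both $\P$ and $S$ are irreducible and projectively normal. Chaining the two resulting inequalities then yields $\alpha(C) \ge \alpha(S)+1 \ge \alpha(\P)+2$.

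The main obstacle is rigour at the canonical singularities, since $\P$ and $S$ are singular and the normal objects are sheaves rather than bundles. For $(S,C)$ this is painless: a general $C$ avoids the finitely many singular points of $S$ and lies in its smooth locus, so the identification of normal sheaves is a genuine isomorphism of locally free sheaves. For $(\P,S)$ the surface $S$ meets $\mathrm{Sing}(\P)$ in finitely many $ADE$ points, and the conormal identification is immediate only on $S\setminus \mathrm{Sing}(S)$. I would remove this difficulty by observing that $S$ is normal and that the relevant normal sheaves are reflexive, being duals of coherent sheaves on a normal variety; an isomorphism off a finite set then extends to a global isomorphism and, in particular, computes the same $H^0$, while the same reflexivity supplies the torsion-freeness used in the restriction sequence above.
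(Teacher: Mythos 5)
Your proposal is correct and follows essentially the same route as the paper: identify $N_{S/\bP^g}$ with $\restr{N_{\P/\bP^{g+1}}}{S}$, restrict along the sequence $0\to N_{\P/\bP^{g+1}}(-2)\to N_{\P/\bP^{g+1}}(-1)\to N_{S/\bP^g}(-1)\to 0$, and use the vanishing $h^0(N(-2))=0$ coming from property $N_2$ (Proposition~\ref{p:N2}), then repeat for $C\subset S$. Your added remarks---that $S$ inherits $N_2$ as a hyperplane section of the aCM scheme $\P$, and that reflexivity handles the identification of normal sheaves across the finitely many singular points---are details the paper leaves implicit but do not change the argument.
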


\begin{proof}
We first compare $\alpha(\P)$ and $\alpha(S)$.
Since $S$ is a hyperplane section of $\P$, one has
$\restr {N_{\P/\bP^{g+1}}} S = N_{S/\bP^g}$.
We thus have the following exact sequence, where the rightmost map is
the restriction map:
\begin{equation}\label{restr-normal}
  0\to      N_{\P/\bP^{g+1}}(-2) \longrightarrow N_{\P/\bP^{g+1}}(-1)
  \longrightarrow N_{S/\bP^g}(-1) \to 0
\end{equation}
with $\O(1)$ the line bundle induced by the embedding in $\P^{g+1}$.
By Proposition~\ref{p:N2}, $\P \subset \P^{g+1}$ has the property
$N_2$, hence $H^0(N_{\P/\bP^{g+1}}(-2))=0$
(see \cite[Lem.~1.1]{aK20} and the references therein).
So the long exact sequence induced by \eqref{restr-normal} shows the
inequality
\begin{equation*}
  h^0 \bigl( N_{\P/\bP^{g+1}}(-1) \bigr)
  \leq
  h^0 \bigl( N_{S/\bP^g}(-1) \bigr).
\end{equation*}
By the definition of $\alpha$ in Theorem~\ref{T:lvovski}, this ends
the proof.
The inequality between $\alpha(S)$ and $\alpha(C)$ is obtained in the
same way.
\end{proof}

\begin{proof}[Proof of Proposition~\ref{p:alpha-val}]
We know the ideal $I_\P$ of $\P\subset \P^{g+1}$ from the proof of
Proposition~\ref{p:N2}. Using the Macaulay2 package
``VersalDeformations'' \cite{M2T1} one can then compute
$\dim (T^1_{C\P,-1})$, and this equals $\alpha(\P)$ by 
Corollary~\ref{c:alpha-N-nice}.

Next we choose two explicit (see below)
linear functionals $l_0$ and $l_1$ defining
hyperplanes $H_0$ and $H_1$ in $\P^{g+1}$, and consider
$S_0 = \bP \cap H_0 \subset \P^{g}$ and $C_0 = S_0 \cap H_1 \subset
\P^{g-1}$. Using the same procedure we compute
\(
   \dim (T^1_{CS_0,-1})
\)
and
% \quad \text{and}
% \quad
\(
   \dim (T^1_{CC_0,-1}),
\)
and find out that
\[
  \dim (T^1_{CS_0,-1}) = \alpha(\P)+1
  \quad \text{and}
  \quad
  \dim (T^1_{CC_0,-1}) = \alpha(\P)+2.
\]
Again, the explicit Macaulay2 commands implementing this procedure
are given at the end of this article.

Let $S$ be a general hyperplane section of $\P$. Then on the one hand
one has $\alpha(S) \geq \alpha(\P)+1$
by Lemma~\ref{l:alpha-hyp} above,
and on the other hand one has
$\alpha(S) \leq \alpha(S_0)$
by semicontinuity since
$\alpha(S)=h^0(N_{S/\P^g}(-1))-g-1$ by definition, and
$\alpha(S_0)\le \dim (T^1_{CS_0,-1})$ by Corollary~\ref{c:alpha-N-ineq}.
Hence $\alpha(S)=\alpha(\P)+1$.
Similar reasoning yields $\alpha(C)=\alpha(\P)+2$ for a general curve
linear section of $\P$.
\end{proof}

In practice, if one chooses random linear functionals $l_0$ and $l_1$
then the complexity of the computation of the weight $-1$ piece of
$T^1$ is too high and one cannot get an answer. We chose
\[
  l_0=x_7+x_{g+1}
  \quad \text{and}
  \quad
  l_1=x_3+x_{g},
\]
so that the corresponding linear sections are again toric; in
particular $S_0$ is not a $K3$ surface and $C_0$ is singular.
This is the reason why we have to resort to
Corollary~\ref{c:alpha-N-ineq} in the proof; note that we cannot
guarantee either that
$\dim (T^1_{CS,-1})$ is semi-continuous as $S$ approaches $S_0$.
In principle Macaulay2 can compute
$h^0(N_{S_0/\P^g}(-1))$ directly, but in practice it is not able to
return an answer.

\begin{cor}
\label{c:extend}
Let $\P \subset \P^{g+1}$ be a $3$-dimensional Gorenstein
weighted projective space in its anticanonical embedding.
Then $\P$ is extendable exactly $\alpha(\P)$ times.
\end{cor}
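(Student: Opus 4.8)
The plan is to prove both halves of ``extendable exactly $\alpha(\P)$ times'': that $\P$ is $\alpha(\P)$-extendable, and that it is not $(\alpha(\P)+1)$-extendable. For the upper bound I would pass to a general curve linear section $C=\P\cap\Lambda$, with $\Lambda\cong\P^{g-1}\subset\P^{g+1}$ of codimension $2$. Since $\Sing(\P)$ is one-dimensional and $\Sing(S)$ is finite for a general surface section $S$, a general such $C$ is a \emph{smooth} canonical curve of genus $g$, and by Proposition~\ref{p:alpha-val} one has $\alpha(C)=\alpha(\P)+2$. Being smooth and not a quadric, $C$ falls directly under Theorem~\ref{T:lvovski}, which gives that $C$ is at most $\alpha(C)$-extendable (the hypothesis $\alpha(C)<g-1$ is immediate from Table~\ref{tab:alpha}, the tightest case being $\#9$). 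Now if $\P$ were $n$-extendable via $\wt\P\subset\P^{g+1+n}$, then $C=\wt\P\cap\Lambda$ exhibits $C$ as an $(n+2)$-extension, so $n+2\le\alpha(C)=\alpha(\P)+2$, i.e.\ $n\le\alpha(\P)$. (One checks $\wt\P$ is not a cone over $C$ for general $\Lambda$, which is routine.) This yields the upper bound.

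For the lower bound the crucial point is to verify condition (b) of Theorem~\ref{T:wahl}, namely $T^2_{A,k}=0$ for all $k\le -2$, where $A=R/I_\P$; condition (a) is Proposition~\ref{p:N2}. I would establish (b) by propagating the vanishing up the chain of linear sections $C\subset S\subset\P$. At the bottom, $C$ is a canonical curve, for which the vanishing is the main result of \cite{ABS17}. Because $\P$ and $S$ are arithmetically Cohen--Macaulay, a general linear form is a nonzerodivisor on each homogeneous coordinate ring, and one has the hyperplane-section exact sequences for graded cotangent cohomology relating $T^2_{CX,k}$, $T^2_{CX,k+1}$ and $T^2_{C(X\cap H),k+1}$. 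Feeding in the integrability and universal-extension statements of Theorem~\ref{T:CDS} for the intermediate $K3$ surface $S$ to control the comparison and connecting maps, one lifts the vanishing first from $C$ to $S$ and then from $S$ to $\P$, throughout the range $k\le -2$.

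Granting (a) and (b), Theorem~\ref{T:wahl} shows that every first-order deformation of $C\P$ of degree $-1$ integrates to an extension, and these are parametrized by $T^1_{C\P,-1}$, of dimension $\alpha(\P)$ by Corollary~\ref{c:alpha-N-nice}. Assembling them as in the universal-extension construction underlying Theorem~\ref{T:CDS} produces an extension $\wt\P\subset\P^{g+1+\alpha(\P)}$ of dimension $3+\alpha(\P)$ having $\P$ as a linear section, so $\P$ is $\alpha(\P)$-extendable. Combined with the upper bound, $\P$ is extendable exactly $\alpha(\P)$ times. When $\alpha(\P)=0$ (cases $\#4,5,7,11$) the same argument simply reads as non-extendability.

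The main obstacle is step (b). The cones $CS$ and $C\P$ are singular along the cones over $\Sing(S)$ and $\Sing(\P)$, hence are \emph{not} isolated singularities, so the off-the-shelf finiteness and boundedness statements for $T^2$ do not apply; one must argue in the prescribed degree range $k\le-2$, where the relevant pieces are governed by projective data. The delicate parts are the precise form of the graded hyperplane-section sequences for $T^2$, the degree bookkeeping confining everything to $k\le-2$, and the use of Theorem~\ref{T:CDS} to force the intervening connecting maps to vanish. Everything else---the upper bound and the final assembly into a universal extension---is a formal consequence of results already established in Sections~\ref{S:nonprimitive} and~\ref{S:extendability}.
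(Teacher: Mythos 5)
Your upper bound is exactly the paper's: apply Theorem~\ref{T:lvovski} to a general smooth curve linear section $C$, for which $\alpha(C)=\alpha(\P)+2$ by Proposition~\ref{p:alpha-val}. That half is fine. The lower bound, however, is where your proposal diverges from the paper and where it has a genuine gap. You propose to verify condition (b) of Theorem~\ref{T:wahl} for the cone over $\P$ itself, namely $T^2_{A,k}=0$ for $k\le -2$, by ``propagating'' the vanishing from the canonical curve (where it is known by \cite{ABS17}) up through $S$ to $\P$ via graded hyperplane-section sequences for $T^2$. You never write down these sequences, and you yourself flag the degree bookkeeping and the vanishing of the connecting maps as ``delicate'' and unresolved. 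That is not a detail: for cones over singular, non-aCM-in-codimension-zero objects with non-isolated singularities there is no off-the-shelf comparison of $T^2$ under hyperplane sections in the needed direction, and Theorem~\ref{T:CDS} (which you invoke to ``control the connecting maps'') says nothing about $T^2$ of these cones. Moreover, even granting (b), Theorem~\ref{T:wahl} only integrates each degree $-1$ first-order deformation to a \emph{one}-step extension; to conclude $\alpha(\P)$-extendability you must assemble these into a single $(3+\alpha(\P))$-dimensional extension, and your appeal to ``the universal-extension construction underlying Theorem~\ref{T:CDS}'' is circular, since that construction is established in \cite{CDS20} only for canonical curves and $K3$ surfaces, not for threefolds.

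The paper avoids both problems by never touching $T^2$ of the cone over $\P$. It applies Theorem~\ref{T:CDS} to the general curve section $C$ (smooth, $g\ge 11$, $\Cliff>2$), obtaining the \emph{universal extension} $X\subset\P^{g+1+\alpha(\P)}$ of $C$, of dimension $\alpha(C)+1=\alpha(\P)+3$. It then observes that the pencil of hyperplane sections of $\P$ through $C$ consists of non-isomorphic $K3$ surfaces by Pardini's theorem \cite{Pa} (equivalently, $\P$ is not a cone over $C$), so by the universality of $X$ the threefold $\P$ is itself a linear section of $X$; hence $X$ is an $\alpha(\P)$-extension of $\P$. If you want to salvage your route, you would need to either actually prove the $T^2$ vanishing for $C\P$ in degrees $\le -2$ and then separately build a universal extension of $\P$, or switch to the paper's argument, which reduces everything to the curve case where the hard analytic input (\cite{ABS17}, packaged in Theorem~\ref{T:CDS}) is already available.
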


\noindent
(Recall that $\alpha(\P)=\alpha(S)-1$ with $\alpha(S)$ as in
Table~\ref{tab:alpha}).

\begin{proof}
First note that by Lvovski's Theorem~\ref{T:lvovski},
applied to a general (smooth) curve linear section of $\P$,
$\P \subset \P^{g+1}$ is at most $\alpha(\P)$-extendable.
To prove the converse, let us consider $C$ a general curve linear
section of $\P$. It is a smooth canonical curve of genus $g\geq 11$
and Clifford index strictly larger than $2$, hence liable to
Theorem~\ref{T:CDS}. So there exists a universal extension of $C$,
which is an $(\alpha(C)+1)$-dimensional variety $X \subset
\P^{g-1+\alpha(C)}$,
\ie an $(\alpha(\P)+3)$-dimensional variety $X \subset
\P^{g+1+\alpha(\P)}$.

The pencil of hyperplanes in $\P^{g+1}$ containing $C$ cuts out on
$\P$ a pencil of $K3$ surfaces, which are not all isomorphic by
\cite[Prop.~1.7]{Pa}
(as observed in \cite{CDGK},
the latter statement in fact applies to all varieties
different from cones).
By the universality of $X$,
this implies that $\P$ is a linear section of $X$, hence it is
$\alpha(\P)$-extendable.
\end{proof}

%%%%%%%%%%%%%%%%%%%%%%%%%%%%% 

\section{Examples}\label{S:examples}

In this section we describe explicitly the output of
Theorem~\ref{T:smoothing} 
and make additional remarks.
We first list the cases to which Theorem~\ref{T:smoothing}
applies; see also Remarks~\ref{r:prokh-degen} and
\ref{r:weighted-trick} for another point of view on these examples.
The notation is that of Table~\ref{tab:alpha}.

\begin{example}[\#1 and \#2]
The general member of $\K_{37}^6$ extends to $\P(1^3,3)$, hence the
application of Theorem~\ref{T:smoothing} to \#1 is trivial.
On the other hand the application to \#2 tells us that there exists a
deformation of $\P(1,1,4,6)$ to $\P(1^3,3)$.
Note that these are the only Fano varieties with canonical Gorenstein
singularities of genus $37$, the maximal possible value, by
\cite{yP05}. $\P(1^3,3) \subset \P^{38}$ is the $2$-Veronese
reembedding of the cone in $\P^{10}$ over the Veronese variety
$v_2(\P^3)$; in particular it is rigid. Thus the deformation of 
$\P(1,1,4,6)$ to $\P(1^3,3)$  exhibits a jump phenomenon.
\end{example}

\begin{example}[\#3]\label{ex2}
Theorem~\ref{T:smoothing} tells us in this case that 
$\P(1,2,2,5) \subset \P^{27}$ deforms to a general $6$-ic hypersurface
$H_6 \subset \P(1^3,3,5)$ in its anticanonical embedding by $\O(5)$.
Such an $H_6$ is singular, and its singularities may be listed
following \cite{fletcher}; in particular
as $5\! \not\kern -.1mm|\ 6$,
$H_6$ passes through the point $P_4=(0:0:0:0:1)$ and one finds it has
a quotient singularity of type $\frac 15 (1,1,3)$ there.

Corollary~\ref{c:extend} tells us that $\P(1,2,2,5) \subset \P^{27}$
is $2$-extendable, as is $H_6 \subset \P(1^3,3,5)$.
The same deformation argument as that given to prove
Theorem~\ref{T:smoothing} shows that the $2$-extension of
$\P(1,2,2,5)$ deforms to that of $H_6$, which is a sextic hypersurface
$\tilde H_6 \subset \P(1^3,3,5^3)$ embedded by $\O(5)$, that is
$- \frac 13 K_{\tilde H_6}$, see \cite{CDdouble}.
\end{example}

\begin{example}[\#4 and \#5]\label{ex3}
\#4 is of course the Veronese variety $v_4(\P^3)$, which is rigid and
extends the general member of $\K_{33}^4$; Theorem~\ref{T:smoothing}
is trivial in this case.
The application to \#5 however tells us that
$\P(1,1,2,4) \subset \P^{34}$ smoothes to $v_4(\P^3)$,
which may be seen elementarily as follows.

Spelling out a monomial basis of $H^0(\P(1,1,2,4), \O(4))$, one sees
that $\O(4)$ induces an embedding of $\P(1,1,2,4)$ as a cone over
$\P(1,1,2)$ embedded by its own $\O(4)$, with
vertex a point, in $\P^9$.
In turn $\P(1,1,2,4) \subset \P^{34}$ is embedded by $\O(8)$, hence it
is the $2$-Veronese reembedding of the latter cone in $\P^9$.
In the same way, the embedding
of $\P(1,1,2)$ by $\O(4)$ is the $2$-Veronese reembedding of a quadric
cone (of rank $3$) in $\P^3$.
% One may then first smooth $\P(1,1,2)$ to a smooth quadric in $\P^3$,
% thus obtaining a cone over a smooth base in the embedding by $\O(4)$, 
% and then smooth the latter cone.

Thus in the embedding by $\O(4)$, $\P(1,1,2,4)$ is the cone over a
section of the Veronese variety $v_2(\P^3)$ by a tangent
hyperplane. This deforms to the cone over a section by a transverse
hyperplane (this corresponds to smoothing the quadric in $\P^3$ image
of $\P(1,1,2)$ by $\O(2)$). In turn, this deforms to the Veronese
variety $v_2(\P^3)$ itself by ``sweeping out the cone'' (see
\ref{p:sweepout}). In its anticanonical embedding, $\P(1,1,2,4)$
correspondingly deforms to the $2$-Veronese re-embedding of
$v_2(\P^3)$, which is the Veronese variety $v_4(\P^3)$.
\end{example}

\begin{example}[\#6]\label{ex4}
This case is similar to \#3 and we will be brief.
Theorem~\ref{T:smoothing} provides a deformation of
$\P(1,3,4,4) \subset \P^{20}$ to the anticanonical embedding by
$\O(3)$ of a general $4$-ic $H_4 \subset \P(1^4,3)$. The latter is
singular; in particular as $3\! \not\kern -.1mm|\ 4$,
$H_4$ always passes through the coordinate point $P_4$ and has a
quotient singularity of type $\frac 13(1,1,1)$ there,
\ie, it is locally isomorphic to the cone over
the Veronese variety $v_3(\P^2)$.

The argument of Theorem~\ref{T:smoothing} shows that the $3$-extension
of $\P(1,3,4,4) \subset \P^{20}$ deforms to that of $H_4$, which is a
$4$-ic hypersurface $\tilde H_4 \subset \P(1^4,3^4)$ embedded by
$\O(3)$, see \cite[\S 3]{CDhigher}.
\end{example}

\begin{example}[\#7]\label{ex5}
Theorem~\ref{T:smoothing} provides a smoothing of 
$\P(1,1,2,2) \subset \P^{29}$ to a smooth quadric $\Q$, in its
canonical embedding. This smoothing may be elementarily found, noting
(as we did for case \#5) that $\O(2)$ realizes $\P(1,1,2,2)$ as a rank
$3$ quadric in $\P^4$. 
\end{example}

\begin{example}[\#9]\label{ex6}
This case is similar to \#3 and \#6, and in fact easier, so we will be
very brief.
Theorem~\ref{T:smoothing} proves that $\P(2,3,3,4)$ deforms to a
general cubic hypersurface $H_3$ in $\P^4$, in particular this is a
smoothing. The $5$-extension of $\P(2,3,3,4)$ deforms to that of
$H_3$, which is a complete intersection $\tilde H_2 \cap \tilde H_3$
in $\P(1^5,2^6)$, see \cite[\S 3]{CDhigher}.
\end{example}

\begin{rem}
\label{r:prokh-degen}
The degeneration of $\P(1^3,3)$ to $\P(1^2,4,6)$ may be seen
explicitly as follows; this has been shown to us by the referee,
inspired by \cite[\S 11]{Hacking}.
The weighted projective space $\P(1^2,4,6)$ is $\Proj(R)$ with $R$ the
graded algebra $\C[x,y,z,w]$ in which $x,y,z,w$ have
respective weights $1,1,4,6$.
It is isomorphic to $\Proj(R^{(2)})$ where $R^{(2)}$ is the algebra
determined by $\O(2)$ on $\P(1^2,4,6)$, i.e.,
the graded piece $R^{(2)}_n$ is $R_{2n}$ for all $n\in \Z$ by definition.

We claim that $\Proj(R^{(2)})$ is naturally a
quadric in $P(1^3,2,3)$.
To see this we note that
$R^{(2)}$ is generated as a $\C$-algebra by
\[
  x^2,xy,y^2,z,w,
\]
which have weights $2,2,2,4,6$ in $R$, hence $1,1,1,2,3$ in
$R^{(2)}$.
The only relation between them is $x^2\cdot y^2 = (xy)^2$, so
\[
  R^{(2)} \cong
  \frac {\C[a,b,c,u,v]}
  {(ac-b^2)},
\]
the isomorphism being given by mapping
$x^2,xy,y^2,z,w$ to $a,b,c,u,v$ respectively.
Therefore $\Proj(R^{(2)})$ is the quadric $ac=b^2$ in
$\Proj(\C[a,b,c,u,v])=\P(1^3,2,3)$.

The degeneration is then gotten by noting that $\P(1^3,3)$ is the
quadric $u=0$ in $\P(1^3,2,3)$. In the pencil of quadrics
\[
  ac-b^2+\lambda u=0,
\]
the member given by $\lambda=0$ is $\P(1^2,4,6)$ and all the others
are isomorphic to $\P(1^3,3)$.
\end{rem}

\begin{rem}
\label{r:weighted-trick}
In fact, following a suggestion of the referee, we have found that all
our examples above may be understood as in the previous
Remark~\ref{r:prokh-degen}. 
In general we shall consider the $d$-Veronese embedding, i.e., the
graded ring $R^{(d)}$, with $d=s/i_S$, where $s$ is the
sum of the weights so that $\omega_\P^{-1}=\O(s)$, and $i_S$ is as in
Table~\ref{tab:alpha}.
Let us briefly indicate the explicit computations.
We take $R=\C[x,y,z,w]$ the graded algebra giving the weighted
projective space under consideration, as in
Remark~\ref{r:prokh-degen}. 

\emph{Example~\ref{ex2}:}
$\P(1,2,2,5)$ is itself a sextic hypersurface in $P(1^3,3,5)$.
Indeed the algebra $R^{(2)}$ is generated by
\[
  x^2,y,z,xw,w^2
\]
which have weights $2,2,2,6,10$ in $R$, hence $1,1,1,3,5$ in
$R^{(2)}$.
The only relation between these generators is
$x^2\cdot w^2=(xw)^2$, which is in
weight $6$ in $R^{(2)}$, so that $\Proj(R^{(2)})$ is naturally a sextic
hypersurface in $P(1^3,3,5)$. 

\emph{Example~\ref{ex3}:} $\P(1^2,2,4)$ is isomorphic to a quadric in
$\P(1^4,2)$, hence it is a degeneration of $\P^3$. Indeed 
the algebra $R^{(2)}$ is generated by
\[
  x^2,xy,y^2,z,w
\]
which have weights $2,2,2,2,4$ in $R$, hence $1,1,1,1,2$ in
$R^{(2)}$.
The only relation between these generators is
$x^2\cdot y^2=(xy)^2$, which has
weight $2$ in $R^{(2)}$.

\emph{Example~\ref{ex4}:} $\P(1,3,4,4)$ is itself a quartic in $\P(1^4,3)$.
Indeed the algebra $R^{(4)}$ is generated by
\[
  x^4,xy,y^4,v,w
\]
which have weights $4,4,12,4,4$ in $R$, hence $1,1,3,1,1$ in
$R^{(4)}$.
The only relation between these generators is
$x^4\cdot y^4=(xy)^4$, which has
weight $4$ in $R^{(2)}$.

\emph{Example~\ref{ex5}:} we have already noted that $\O(2)$ realizes
$\P(1,1,2,2)$ as a rank $3$ quadric in $\P^4$.

\emph{Example~\ref{ex6}:} $\P(2,3,3,4)$ is the complete intersection of a
quadric and a cubic in $\P(1^5,2)$, and thus it is a degeneration of a
cubic in $\P^4$.
The algebra $R^{(6)}$ is generated by
\[
  x^3,xw,w^3,y^2,yz,z^2
\]
which have weights $6,6,12,6,6,6$ in $R$, hence $1,1,2,1,1,1$ in
$R^{(6)}$. 
This time there are two relations, namely
\[
  x^3\cdot w^3 = (xw)^3
  \quad
  \text{and}
  \quad
  y^2 \cdot z^2 =(yz)^2,
\]
which have respectively degrees $3$ and $2$ in $R^{(6)}$.
\end{rem}

We conclude with some remarks on the cases in which
Theorem~\ref{T:smoothing} does not apply because $\alpha(S) >
\alpha(S')$; the notation is still that of Table~\ref{tab:alpha}. 

\begin{prop}
Let $\P=\P(a_0,a_1,a_2,a_3)$ be a Gorenstein weighted projective
$3$-space of type \#$i$ with $i \in \{8,10,11,12,13,14\}$.
Then an anticanonical divisor of $\P$ is a double cover of the
weighted projective plane $\P(a_0,a_1,a_2)$, branched over a
bianticanonical divisor $B \in |-2K_{\P(a_0,a_1,a_2)}|$.
In all cases $2K_{\P(a_0,a_1,a_2)}$ is invertible, whereas
in all cases but \#11 the canonical sheaf $K_{\P(a_0,a_1,a_2)}$ is
not invertible.
\end{prop}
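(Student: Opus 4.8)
The plan is to exploit a numerical coincidence common to all six cases: the largest weight is the sum of the other three. Reading off Table~\ref{tab:Gwps}, I would first check that in each case $\#i$, $i\in\{8,10,11,12,13,14\}$, one has $a_3=a_0+a_1+a_2$, so that $s=2a_3$ and $\omega_\P^{-1}=\O(2a_3)$. Writing $\bx=(x_0,\dots,x_3)$ for the weighted coordinates, a general anticanonical divisor $S$ is cut out by a weighted homogeneous $F$ of degree $2a_3$; since $\deg(x_3^2)=2a_3$ whereas $\deg(x_3)=a_3$, the polynomial $F$ is a quadratic in $x_3$,
\[
  F = c\,x_3^2 + x_3\,G(x_0,x_1,x_2) + H(x_0,x_1,x_2),
\]
with $G,H$ of weighted degrees $a_3,2a_3$ and with $c$ in degree $0$, hence a constant, nonzero for general $S$; I normalise $c=1$.

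Next I would realise $S$ as a double cover. The projection $\pi\colon\P(a_0,a_1,a_2,a_3)\dashrightarrow\P(a_0,a_1,a_2)$ forgetting $x_3$ is undefined only at the coordinate point $P_3=(0{:}0{:}0{:}1)$; as $F(P_3)=c=1\neq 0$, the surface $S$ misses $P_3$, so $\restr\pi S$ is a morphism, and it is finite of degree $2$ because $F$ is monic quadratic in $x_3$. Completing the square over $\C$ (legitimate, and degree-preserving since $\deg G=\deg x_3$) via $x_3\mapsto x_3+\tfrac12 G$ rewrites $F$ as $x_3^2-\tfrac14(G^2-4H)$, exhibiting $\restr\pi S$ as the double cover of $\P(a_0,a_1,a_2)$ branched along $B=\{G^2-4H=0\}$, a weighted homogeneous divisor of degree $2a_3$.

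It then remains to identify $B$ and to settle invertibility. By the description of the dualizing sheaf recalled in \ref{p:wps}(3), valid for any weighted projective space, the dualizing sheaf of $\P':=\P(a_0,a_1,a_2)$ is $\omega_{\P'}=\O(-(a_0+a_1+a_2))=\O(-a_3)$, so $-2K_{\P'}=\O(2a_3)$ and $B\in|{-2K_{\P'}}|$; for general $S$ the divisor $B$ is general in this system, since $H\mapsto G^2-4H$ surjects onto $H^0(\P',\O(2a_3))$ for fixed $G$. Invertibility is then pure arithmetic via \ref{p:wps}(1): on $\P'$ the sheaf $\O(d)$ is invertible precisely when $m':=\mathrm{lcm}(a_0,a_1,a_2)$ divides $d$. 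Hence $2K_{\P'}=\O(-2a_3)$ is invertible iff $m'\mid 2a_3$, and $K_{\P'}=\O(-a_3)$ iff $m'\mid a_3$; a one-line inspection of the six triples confirms that $m'\mid 2a_3$ always holds, while $m'\mid a_3$ holds only in case $\#11$, where $(a_0,a_1,a_2)=(1,2,3)$ and $m'=a_3=6$.

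I expect the only delicate point to be verifying that $\restr\pi S$ genuinely is a finite double cover in the weighted (orbifold) setting, rather than any difficulty of substance: one must confirm that $S$ avoids the indeterminacy point $P_3$ of $\pi$, and that passing to the branch locus is unaffected by the quotient singularities of $\P$ and $\P'$ along the coordinate strata. The remainder is bookkeeping with weighted degrees together with the short $\mathrm{lcm}$ computation above.
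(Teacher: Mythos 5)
Your proposal is correct and follows essentially the same route as the paper: observe that $a_3=a_0+a_1+a_2$ in all six cases, write the anticanonical equation as a monic quadratic in $x_3$, complete the square to exhibit the projection away from $P_3$ as a double cover branched in degree $s=2a_3=-2K_{\P(a_0,a_1,a_2)}$, and settle invertibility by the lcm criterion of \ref{p:wps}(1). Your version is in fact slightly more careful than the paper's (checking that $S$ avoids $P_3$, and getting the constant $\tfrac14$ in $f_s'=f_s-\tfrac14 f_{a_3}^2$ right), but there is no difference of substance.
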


\begin{proof}
The key fact is that in all cases one has $a_3=a_0+a_1+a_2$.
Then in homogeneous coordinates $(x_0:\dots:x_3)$, a degree
$s=a_0+a_1+a_2+a_3$ homogeneous polynomial is of the form
\[
  x_3^2 + x_3\cdot f_{a_3}(x_0,x_1,x_2)
  + f_s(x_0,x_1,x_2),
\]
with $f_d$ homogeneous of degree $d$.
We may change weighted homogeneous coordinates by setting
$x_3' = x_3 + \tfrac 12 f_{a_3}$. This gives the polynomial
\[
  (x_3')^2 + f_s'(x_0,x_1,x_2)
\]
where $f_s' = f_s - f_{a_3}^2$, which defines a double cover of
$\P(a_0,a_1,a_2)$ as asserted.
The last affirmation is readily checked using the statements of
section~\ref{S:wps}.
\end{proof}

\begin{example}[\#11]
The anticanonical divisors in $\P(1,2,3,6)$ are double covers of
$\P(1,2,3)$, which is in fact a Del Pezzo surface of
degree $6$, with one $A_1$ and one $A_2$ double points, which may be
constructed by blowing up the plane $\P^2$ along three aligned
infinitely near points. %That is, we choose a line $L_0$ and blow-up $\P^2$ at some point $p_0 \in L_0$; then we blow-up again at the intersection point of the proper transform $L_1$ of $L_0$ with the exceptional divisor; then we blow-up one last time at the intersection point of the proper transform $L_2$ of $L_1$ with the last exceptional divisor.

In the embedding by $\O(6)=-\frac{1}{2}K_\P$,  $\P(1,2,3,6)$ is the cone over this
toric Del Pezzo surface, in its anticanonical embedding in $\P^6$.
It follows that $\P(1,2,3,6)\subset \bP^7$ is a limit of cones   over 
smooth Del Pezzo surfaces of degree $6$. Every such cone $T$ is
obstructed in $\mathrm{Hilb}^{\P^7}$, being in the closure of two
components, one parametrizing embedded $\bP^1\times\bP^1\times\bP^1$'s
and the other hyperplane sections of $\bP^2\times\bP^2$, as observed
in \cite[Example 4.5]{CS18}.  Therefore $[\P(1,2,3,6)]\in
\mathrm{Hilb}^{\P^7}$ is obstructed as well.
In fact the embedded versal deformation of $\P(1,2,3,6) \subset \P^7$
has been explicitly computed, see \cite{Alt} and \cite{BRS}.

On the other hand,
$\O(12)=\O(K_{\bP})$ embeds $\P(1,2,3,6)\subset \bP^{26}$ and
$\alpha(\bP, K_\P)=0$, while its general
anticanonical divisor $S$ satisfies $\alpha(S, -\restr {K_\P} S)=1$ (see Table~\ref{tab:alpha}). Therefore, by Theorem \ref{T:wahl}, $\ol{CS}$ has a unique $1$-parameter deformation to   $\bP$. 
\end{example}

%%%%%%%%%%%%%%%%%%%%%%%%%%%%%

%\medskip
\noindent
\textsc{Institut de Mathématiques de Toulouse~; UMR5219.\\
Université de Toulouse~; CNRS.\\
UPS IMT, F-31062 Toulouse Cedex 9, France.}
\\
\texttt{thomas.dedieu@math.univ-toulouse.fr}

\medskip\noindent
\textsc{Dipartimento di Matematica e Fisica \\ Università Roma Tre \\ L.go S.L. Murialdo, 1 -  00146 Roma, Italia.}\\
\texttt{sernesi@gmail.com}

%\end{document}
\clearpage

%%%%%%%%%%%%%%%%%%%%%%%%%%%%%%%%%%%%%%%%%%%%%%%%%%%%%%%%%%%%%%%%%%%
\begin{verbatim}
Macaulay2 computations

-- open M2 with GC_INITIAL_HEAP_SIZE=100G

loadPackage "VersalDeformations"
kk = QQ -- alt: kk = ZZ/31

-- liste poids 3-dim Gwps
Wliste =
{{2, 3, 3, 4},
{1, 3, 4, 4},
{2, 3, 10, 15},
{1, 6, 14, 21},
{1, 2, 2, 5},
{1, 1, 2, 2},
{1, 2, 3, 6},
{1, 2, 6, 9},
{1, 4, 5, 10},
{1, 3, 8, 12},
{1, 1, 2, 4},
{1, 1, 1, 1},
{1, 1, 1, 3},
{1, 1, 4, 6}
}

-- pour calculer n tel que -K=O(n)
K = W -> (
    n:=0;
    for i to 3 do n=n+W_i;
    n)

-- vérification propriété N2
gVN2 = W -> (
    R:=kk[u_0..u_3, Degrees=>W];
    M:=basis(K(W),R);
    n:=numgens source M;
    S:=kk[x_0..x_(n-1)];
    T:=R**S;
    I:= ideal (for i from 0 to n-1 list (((map(T,S))(x_i)-(map(T,R))(M_(0,i)))));
    J:= eliminate (for i to 3 list (map(T,R))(u_i), I);
    p:= map (S,T, (for i to 3 list 0)|(for i to (n-1) list x_i));
    M1:= syz generators p(J);
    W,
    betti M1
    )

for i from 0 to 13 do print (gVN2(Wliste_i))

-- calcul T^1
-- wps
gV = W -> (
    R:=kk[u_0..u_3, Degrees=>W];
    M:=basis(K(W),R);
    n:=numgens source M;
    S:=kk[x_0..x_(n-1)];
    T:=R**S;
    I:= ideal (for i from 0 to n-1 list (((map(T,S))(x_i)-(map(T,R))(M_(0,i)))));
    J:= eliminate (for i to 3 list (map(T,R))(u_i), I);
    p:= map (S,T, (for i to 3 list 0)|(for i to (n-1) list x_i));
    W,
    numgens source CT^1(-1,p(J))
    )

for i from 0 to 13 do print (gV(Wliste_i))

-- K3
gS = W -> (
    R:=kk[u_0..u_3, Degrees=>W];
    M:=basis(K(W),R);
    n:=numgens source M;
    S:=kk[x_0..x_(n-1)];
    T:=R**S;
    I:= ideal (for i from 0 to n-1 list (((map(T,S))(x_i)-(map(T,R))(M_(0,i)))));
    J:= eliminate (for i to 3 list (map(T,R))(u_i), I);
    p:= map (S,T, (for i to 3 list 0)|(for i to (n-1) list x_i));
    f:=x_7+x_(n-1);
    S0:=kk[x_0..x_(n-2)];
    q:= map (S0,S, (for i to n-2 list x_i)| {0});
    J0 = q(eliminate ({x_(n-1)}, ideal (f) + p(J)));
    W,
    numgens source CT^1(-1,J0)
    )

for i from 0 to 13 do print (gS(Wliste_i))

-- can curve
gC = W -> (
    R:=kk[u_0..u_3, Degrees=>W];
    M:=basis(K(W),R);
    n:=numgens source M;
    S:=kk[x_0..x_(n-1)];
    T:=R**S;
    I:= ideal (for i from 0 to n-1 list (((map(T,S))(x_i)-(map(T,R))(M_(0,i)))));
    J:= eliminate (for i to 3 list (map(T,R))(u_i), I);
    p:= map (S,T, (for i to 3 list 0)|(for i to (n-1) list x_i));
    f0:=x_7+x_(n-1);
    f1:=x_3+x_(n-2);
    S0:=kk[x_0..x_(n-3)];
    q:= map (S0,S, (for i to n-3 list x_i)| {0, 0});
    J0 = q(eliminate ({x_(n-2),x_(n-1)}, ideal (f0,f1) + p(J)));
    W,
    numgens source CT^1(-1,J0)
    )

for i from 0 to 13 do print (gC(Wliste_i))
\end{verbatim}

\end{document}